\crefname{equation}{}{}
\newtheorem{theorem}{Theorem}
\newtheorem*{theorem*}{Theorem}
\newtheorem{definition}{Definition}
\newtheorem{proposition}[theorem]{Proposition}
\newtheorem{corollary}[theorem]{Corollary}
\title{The Augmented Jump Chain}
\author{\large Alexander Sikorski  \thanks{\href{mailto:sikorski@zib.de}{sikorski@zib.de}}, Marcus Weber, Christof Schütte
}
\institution{Zuse Institute Berlin, November 2020}
\date{\today}
\begin{document}

\maketitle

\begin{abstract} 
Modern methods of simulating molecular systems are based on the mathematical theory of Markov operators with a focus on autonomous equilibrated systems. However, non\hyp{}autonomous physical systems or non\hyp{}autonomous simulation processes are becoming more and more important.
We present a representation of non\hyp{}autonomous Markov jump processes as autonomous Markov chains on space-time.
Augmenting the spatial information of the embedded Markov chain by the temporal information of the associated jump times, we derive the so-called augmented jump chain.
The augmented jump chain inherits the sparseness of the infinitesimal generator of the original process and therefore provides a useful tool for studying time\hyp{}dependent dynamics even in high dimensions.
We furthermore discuss possible generalizations and applications to the computation of committor functions and coherent sets in the non\hyp{}autonomous setting.
After deriving the theoretical foundations we illustrate the concepts with a proof-of-concept Galerkin discretization of the transfer operator of the augmented jump chain applied to simple examples.
\end{abstract}
\keywords{non\hyp{}autonomous, Markov jump process, sparse, space-time, embedded chain, infinitesimal generator,  transfer operator,   committor functions, coherent sets}

\section{Introduction}

The last decade of theoretical treatment of simulation methods was characterized by the analysis of autonomous Markov processes. The uniform concept of Markov operators and infinitesimal generators was investigated for these purposes which has led to a rich development of analysis tools in mathematics. In order to be able to benefit from these tools also in the non\hyp{}autonomous case, a broader uniform theoretical framework is required to deal with non\hyp{}autonomous as well as autonomous methods and processes (not only) in molecular simulation.

Physical models often arise from the principle of the cause-and-effect relationship. To think of a process as a sequence of causes and effects straightforwardly leads to the formulation of a Markov process, a process where the future only depends on the present. Thus, due to their flexibility Markov processes have become an important cornerstone for the modeling of many complex systems \cite{karlin2014first,schuttesarich}. Although the dynamical law of the state evolution, e.g. a differential equation with a first-order derivative of the time variable, might be highly nonlinear, the mathematical object that accounts for the transfer of probability densities of system states is a linear transfer operator (the ``adjoint'' continuous counterpart of a transition matrix of a Markov chain). The formulation of Markov processes in terms of transfer operators has proven to be a powerful tool for their analysis. Techniques like transition path theory \cite{tpt}, reaction coordinates \cite{bittracher2018transition,noe2016tica} and coarse graining \cite{schuttesarich}, clustering \cite{roblitz2013fuzzy} and coherent set analysis \cite{froyland2009almost, koltai2016metastability} are just a few methods building on this formalism.

However, the computational cost of these approaches grows with increasing numbers of states and quickly becomes infeasible for high-dimensional problems. Their corresponding formulation in terms of infinitesimal generators or rate matrices \cite{froyland2013estimating,donati2018sqra} promises to alleviate computational costs by making use of the sparse structure in many real world problems, where instantaneous state changes are restricted by a locality assumption. We want to be able to exploit this sparsity also for non-autonomous processes. If one were to find a generator-like object for non\hyp{}autonomous processes, then corresponding methods could be transferred directly.

Physical models mostly refer to self-contained systems that can be isolated in the laboratory and which, therefore, allow for the analysis of autonomous processes, often after equilibration  of the system. However, if we want to study the influence of external forcing (e.g., of external control), transient dynamics or the production rate of catalytic cycles, then non\hyp{}autonomous (i.e. having a time\hyp{}dependent, changing law of state evolution) and non\hyp{}equilibrium systems play an important role.

Whilst there are extensions to the non\hyp{}stationary regimes \cite{koltai2016metastability, koltai2018optimal} we do not know of any such approach inheriting the sparseness of the generator and thus facilitating the analysis of high-dimensional complex systems.

In this article we focus on Markov jump processes which are memoryless stochastic processes continuous in time and discrete in space and have been successfully used in reaction kinetics, queueing theory, Markov state models and network analysis.

Aiming at a sparse approach to non\hyp{}autonomous dynamics we develop a novel representation of time\hyp{}dependent Markov jump processes.
Inspired by recent developments in physics \cite{briggs, schild2019time} which look at time emerging from the order of events rather than as a constantly evolving exogenous entity we look at the process as a series of jumps in space \emph{and} time such that every change that takes place in the system is a change in the spatial {\em and} in the time domain. Formally this amounts to the extension of the ideas of the embedded Markov chain \cite{norris} or semi-Markov processes \cite{limnios2012semi} to the time\hyp{}dependent setting and will lead us to an autonomous process in space-time, the \emph{augmented jump chain}.
A realization of this process, consisting of sequences of space-time points, corresponds to the time\hyp{}continuous trajectory of the original process.
Imagine tracking an ensemble of particles, all starting at the same time in their individual spatial states. We observe their respective jumps which take place in space and time. The transfer operator (of the augmented jump chain) maps the distribution of such an ensemble to the distribution after its next jump whilst retaining the local nature of the original process: particle states still only jump to their ``neighbouring'' states.
Although this operator evolves the classical time in a concurrent manner, we can reconstruct the whole family of Perron-Frobenius operators (for each fixed time) by means of an iterative procedure which we will denote as \emph{synchronization}.
What is more interesting though is that we can compute the action of its dual, the Koopman operator, directly by solving a linear boundary value problem akin to the Chapman-Kolmogorov equation.
This linear problem furthermore resembles the computation of classical committor functions and we show how it naturally leads to an extension of the committor framework to the non\hyp{}autonomous regime with the Koopman operator being a special case of such a non\hyp{}autonomous committor.
We conclude by deriving a (sparse) finite-time Galerkin projection of the transfer operator and applying it to two illustrative examples.

\newcommand{\statespace}{\mathbb{X}}
\newcommand{\timespace}{\mathbb{T}}
\newcommand{\sigmaalg}{\mathcal{A}}
\newcommand{\meas}{\mu}
\newcommand{\dd}{\mathrm{d}}
\newcommand{\PP}{\mathbb{P}}
\newcommand{\ind}{\mathds{1}}
\renewcommand{\SS}{\statespace}
\newcommand{\TT}{\timespace} % The EAMC Transfer Operator
\newcommand{\ST}{\SS \times \TT}
\newcommand{\prop}{\mathcal{P}} % the classical propagator
\newcommand{\koop}{\mathcal{K}}
\newcommand{\JumpOp}{\mathcal{J}}
\newcommand{\MeasOp}{\mathcal{M}}
\newcommand{\Ind}{\mathds{1}}
\newcommand{\JumpAd}{\JumpOp^\dagger}
%\newpage
\section{Background}
In this section we will introduce the notation and recall some basic results needed for the subsequent sections.

Let the set $\statespace = \{x_i\}_{i=1,...,N}$ denote a finite state space and $\{X_t\}_{t\in \TT}$ a time\hyp{}continuous Markov chain (also called Markov jump process) on $\statespace$ with  $\timespace = \mathbb{R}^+_0$ denoting the time domain.
It is well known \cite{schuttesarich} that this process can be described by means of its associated stochastic transition kernel  
\begin{equation}
    k(x, s, y, t) = \PP (X_t = y | X_s = x) \label{eqn:transkernel}
\end{equation}
denoting the conditional transition probabilities.
This kernel gives rise to a family of important transfer operators, 
the \emph{propagator} (or \emph{Perron-Frobenius operator}) acting on densities $\prop: L^1(\statespace) \rightarrow L^1(\statespace), $ 
\begin{equation}
   \left[\prop^{s,t} f\right] (y) = \sum_{x \in \SS}k(x, s, y, t) f(x) 
\end{equation}
and its adjoint, the \emph{Koopman operator} acting on observables $\koop: L^\infty(\statespace) \rightarrow L^\infty(\statespace), $
\begin{equation}
    \left[\koop^{s,t} g\right] (x) = \sum_{y \in \SS} k(x, s, y, t ) g(y).
\end{equation}
These two are adjoint in the sense that $\left<\prop^{s,t}f, g\right> = \left<f, \koop^{s,t}g\right>$ with $\left<\cdot,\cdot\right>$ denoting the corresponding dual pairing.
This equality illustrates that evolving a density $f$ forward in time via $\prop$ and measuring the observable $g$ in the future is the same as pulling the observable $g$ back in time via $\koop$ and applying it to the current state $f$. Therefore the propagator and Koopman operator are also called forward- and backward transfer operator respectively.

Note that we are explicitly interested in time\hyp{}dependent (non\hyp{}autonomous) processes and as such the above objects in general depend on both, the starting time $s$ \emph{and} the end time $t$. In contrast to the time\hyp{}independent (autonomous) regime, where the transfer operators merely depend on the elapsed time $t-s$ and thus form a one-parameter semi-group $\prop^{t-s} := \prop^{s,t}$, the non\hyp{}autonomous pendant does not allow for such a simple construction.

We can nevertheless define the time\hyp{}dependent \emph{infinitesimal generator} at each time $t$ by 
\begin{equation}
    Q(t) = \lim_{\Delta t \searrow 0 } \frac{P_{t}^{t,t+\Delta t} u - u}{\Delta t}.
\end{equation}

We can denote the generator as a matrix $Q(t) = (q_{ij}(t))$ composed of the transition rates from state $x_i$ to $x_j$, 
\begin{equation}
    q_{ij}(t) := \left[ Q(t) \ind_{x_i}\right] (x_j), \quad 1\le i,j \le N,
\end{equation}
with $\Ind$ denoting the indicator function.

We furthermore introduce the shorthand notation for the outbound rate
\begin{equation}\label{eqn:rowsum}
    q_i(t) := - q_{ii}(t) = \sum_{j \neq i} q_{ij}(t), \quad 1\le i \le N
\end{equation}
where the latter equality follows from the fact that our system is (probability-) mass conserving.
In the case of autonomous systems, i.e. $Q(t) \equiv Q$, we will denote these quantities simply by $q_{ij}$ and $q_{i}$.

The generator is of special interest for systems which satisfy the so-called locality assumption, i.e. states only interact with a few other states, as in that case the generator can be represented as a sparse matrix. This diminishes the computational cost in the analysis of many real-world systems, e.g. spatial diffusion processes, where particles can only jump to spatially neighbouring cells. 

The definition of the infinitesimal generator motivates the formal linear equation
\begin{equation}
    \frac{\dd}{\dd t} \prop^{s,t} = Q(t) \prop^{s,t}, \quad \prop^{s,s} = I
\end{equation}
with $I$ denoting the identity operator.

For the autonomous case where $Q(t)\equiv Q$ we indeed know that $\prop^t = e^{tQ}$. This has very useful applications in practice:
Since $\prop^t$ and $Q$ are related via the exponential map, their eigenvectors are the same. Hence they share many statistics, such as their invariant distributions.

One might hope to extend this relationship to the non\hyp{}autonomous regime  by replacing the exponent $tQ$ with its integrated analogue  \cite{reuter2019generalized}
\begin{equation}
    \Omega (t)= \int_{0}^t Q(u) \dd u
\end{equation}
but this does not hold for noncommutative $Q(t)$.
There exist perturbative approaches to the solution of this problem such as the Dyson and Magnus series adjusting for the noncommutativity by computing nested commutators, but these will in general not remain sparse. 
We will tackle the problem from the perspective of the \emph{jump chain} (also called embedded Markov chain) and extend it to the time\hyp{}dependent regime while still inheriting the sparse structure of $Q(t)$.

\begin{figure}
    \centering
    \begin{tikzpicture}[scale=.8]
    
    \draw[-{>[scale=1.2]}] (0,0) -- (8,0);
    \draw[-{>[scale=1.2]}] (0,0) -- (0, 6);
    \draw (0,1) node[circle,fill, scale=.8](A){} -- ++ (3,0) node[circle,draw,fill=white, scale=.8](B){};
    \draw[dashed] (B) -- ++ (0,4) node[circle,fill, scale=.8](C){};
    \draw (C) -- ++ (4,0) node[circle,draw,fill=white, scale=.8](D){};
    \draw[dashed] (D) -- ++ (0,-3) node[circle,fill, scale=.8](E){};
    \draw (E) -- ++ (1,0);
    
    \draw[-{Stealth[scale=2]}] (A) to[bend left] (C);
    \draw[-{Stealth[scale=2]}] (C) to[bend right] (E);
    
    \draw (0,0) -- ++(0,-.3) node[below, fill=white](J0){$J_0$};
    \draw (3,0) -- ++(0,-.3) node[below, fill=white](J1){$J_1$};
    \draw (7,0) -- ++(0,-.3) node[below, fill=white](J2){$J_2$};
    
    \draw (0,1) -- ++(-1em, 0) node[left]{$Y_0$};
    \draw (0,5) -- ++(-1em, 0) node[left]{$Y_1$};
    \draw (0,2) -- ++(-1em, 0) node[left]{$Y_2$};
    
    \draw (J0) -- ++(0,-.5) node[pos=.5](H0){};
    \draw (J1) -- ++(0,-0.5) node[pos=.5](H1){};
    \draw (J2) -- ++(0,-0.5) node[pos=.5](H2){};
    
    \draw[{<[scale=1.2]}-{>[scale=1.2]}] (H0) -- (H1) node[pos=.5, above]{$H_1$};
    \draw[{<[scale=1.2]}-{>[scale=1.2]}] (H1) -- (H2) node[pos=.5, above]{$H_2$};
    
    \end{tikzpicture}
    \caption[]%
    {Illustration of the Jump chain
    \par \small
    Depicted (horizontal lines) is a realization of the Markov process $X_t$ in space time. It can be decomposed into its spatial component in form of the jump chain $Y_n$ (dashed lines) and its temporal component, the jump times $J_n$. Gluing both together we end up with the augmented jump chain in space-time (curved arrows).}
\end{figure}

Let us therefore recall the classical construction of the jump chain.
\begin{definition} \label{definition:processes}
Let $X_t$ be a Markov jump process.\\
For $n = 0, 1, 2, ...$ define the \emph{jump times} of $X_t$ to be
\begin{equation}
    J_0 = 0, J_{n+1} = \inf \left\{ t: t > J_n, X_t \ne X_{J_n}  \right\}
\end{equation}
if the infimum is attained or $\infty$ otherwise.
The corresponding \emph{holding times} are defined as 
\begin{equation}
    H_n = J_{n} - J_{n-1}.
\end{equation}
Furthermore define the \emph{jump chain} (also called the \emph{embedded chain}) of $X_t$ to be
\begin{equation}
    Y_n = X_{J_n}.
\end{equation}
\end{definition}

This construction decomposes the original jump process $X_t$ in two components: the temporal component in form of the jump times $J_n$, which amount to the times at which $X_t$ changes its state, as well as the spatial component in form of the jump chain $Y_n$ which keeps track of these states. The holding times, i.e. the differences between the jump times, amount to the time each state remains in the same position.

We can reconstruct the original process from by 
\begin{equation}
\label{eqn:reconstructx}
    X_t = Y_{c(t)}
\end{equation}
with the jump count given by
\begin{equation}
\label{eqn:jumpcount}
    c(t) = \max \{ n \mid J_n \le t\}
\end{equation} 
and $J_n = \sum_{i \le n} H_n$.

The following theorem allows us to characterize both components explicitly in terms of the infinitesimal generator for the case of an autonomous process:
\begin{theorem}\cite[Thm 3.15]{weinan2019applied} \label{thm:aut}Let $X_t$ be an autonomous Markov jump process with infinitesimal generator $Q=(q_{ij})$.

Then the jump chain $Y_n$ is a Markov chain with transition probabilities $\PP(Y_{n+1}=x_j \mid Y_n=x_i) = \tilde q_{ij}$ given by
\begin{align}\begin{split} \label{eqn:jumpprops}
\tilde q_{ij} &= \begin{cases}
q_{ij} / q_i,\phantom{0} &\text{if $j \ne i $ and $q_i \ne 0$}
\\ 0, &\text{if $j \ne i $ and $q_i = 0$}
\end{cases} \\ 
\tilde q_{ii} &= \begin{cases}
0,\phantom{q_{ij} / q_i} &\text{if $q_i > 0$}
\\ 1, &\text{if $q_i = 0$}.
\end{cases}
\end{split}\end{align}

Furthermore the holding times $H_1, H_2, ...$ are independent exponential random variables with parameters $q_{Y_0}, q_{Y_1}, ...,$ respectively.
\end{theorem}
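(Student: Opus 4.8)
The plan is to reduce everything to two pillars: the \emph{strong Markov property} together with time-homogeneity, which forces the holding times to be memoryless and hence exponential; and the short-time expansion of the transition probabilities $\PP(X_{\Delta t} = x_j \mid X_0 = x_i) = \delta_{ij} + q_{ij}\,\Delta t + o(\Delta t)$ implied by the generator, which pins down the exponential rates and the jump probabilities. Since $\SS$ is finite the rates $q_i$ are bounded, so the process is non-explosive and has at most finitely many jumps in any bounded interval; this lets me treat the $J_n$ as almost surely increasing, honest stopping times and justifies the infinitesimal estimates below.

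First I would analyse the very first holding time $H_1$ started from $X_0 = x_i$. By time-homogeneity and the Markov property, conditioning on $\{H_1 > s\}$ (equivalently on $\{X_u = x_i \text{ for all } u \le s\}$) leaves the process in state $x_i$ and restarts it afresh, so $\PP(H_1 > s + u \mid H_1 > s) = \PP(H_1 > u)$. The only distribution on $[0,\infty]$ with this memoryless property is the exponential, whence $\PP(H_1 > t) = e^{-\lambda_i t}$ for some $\lambda_i \ge 0$. To identify $\lambda_i$ I would use the generator: the probability of at least one jump out of $x_i$ during $[0,\Delta t]$ equals $1 - e^{-\lambda_i \Delta t} = \lambda_i\,\Delta t + o(\Delta t)$, while the same probability equals $\sum_{j \ne i} q_{ij}\,\Delta t + o(\Delta t) = q_i\,\Delta t + o(\Delta t)$ by the expansion above (the chance of two or more jumps being $o(\Delta t)$). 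Matching the two gives $\lambda_i = q_i$. The degenerate case $q_i = 0$ is exactly an absorbing state: $H_1 = \infty$ almost surely, consistent with an exponential of rate $0$.

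Next I would determine the jump target. Conditioning on exactly one jump occurring in $[0,\Delta t]$, the probability of landing in $x_j$ with $j \ne i$ is $(q_{ij}\,\Delta t + o(\Delta t))/(q_i\,\Delta t + o(\Delta t))$, which tends to $q_{ij}/q_i$ as $\Delta t \searrow 0$; this is the claimed $\tilde q_{ij}$, and in the limit the single-jump event factorises into its timing and its destination, so the target is independent of $H_1$. For $q_i = 0$ no jump ever occurs, giving $\tilde q_{ii} = 1$ and $\tilde q_{ij} = 0$. To obtain the full Markov and independence structure I would then iterate via the strong Markov property: each $J_n$ is a stopping time, and conditioned on $Y_n = x_k$ the shifted process $(X_{J_n + t})_{t \ge 0}$ is an independent copy of $X$ started at $x_k$. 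This simultaneously shows that $Y_{n+1}$ depends on the past only through $Y_n$, so that $Y_n$ is a Markov chain with kernel $\tilde q$, and that, given the sequence $Y_0, Y_1, \dots$, the holding times $H_1, H_2, \dots$ are independent with $H_{n+1} \sim \mathrm{Exp}(q_{Y_n})$.

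The main obstacle I anticipate is the rigorous deployment of the strong Markov property at the random jump times, rather than the routine infinitesimal bookkeeping. One must verify that the $J_n$ are genuine stopping times of the natural filtration and that the post-jump evolution is conditionally independent of the pre-jump history given $Y_n$; both the memorylessness argument and the iteration rest on this. On a finite state space non-explosivity removes the only serious danger, namely an accumulation of jumps, so once the strong Markov property is in place the remaining estimates --- controlling multiple jumps as $o(\Delta t)$ and passing to the limit --- are entirely standard.
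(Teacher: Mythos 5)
There is no proof in the paper to compare yours against: the statement is imported verbatim, with citation, from the reference given in the theorem header, and the paper never proves it. The closest relative inside the paper is the proof of \cref{thm:ajc}, the time\hyp{}dependent generalization, which proceeds by conditioning on the jump time and invoking the non\hyp{}homogeneous exponential (hazard) distribution derived in the appendix. Judged on its own, your proposal is the standard textbook argument for the autonomous case, and its structure is sound: memorylessness of the holding time via the Markov property, characterization of the exponential law, identification of the rate and of the jump distribution from the expansion $\mathbb{P}(X_{\Delta t}=x_j\mid X_0=x_i)=\delta_{ij}+q_{ij}\,\Delta t+o(\Delta t)$, and the strong Markov property at the stopping times $J_n$ to obtain the Markov property of $(Y_n)$ and the conditional independence of the holding times; finiteness of the state space disposes of explosion.

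One link in your chain is argued backwards, though. You deduce the independence of the jump destination $Y_1$ from the holding time $H_1$ out of the limit computation (\emph{in the limit the single-jump event factorises into its timing and its destination}). That inference is invalid: the limit only controls the conditional law of the destination given a jump in a vanishing window, and says nothing about dependence at positive times. The correct order is the reverse, and it uses machinery you already have on the table: conditional on $\{H_1>s\}$, the Markov property at the deterministic time $s$ restarts the process at $x_i$, and since no jump has occurred before $s$, the first jump destination of the restarted process is again $Y_1$; this yields $\mathbb{P}(Y_1=x_j,\,H_1>s)=\mathbb{P}(H_1>s)\,\mathbb{P}(Y_1=x_j)$, i.e.\ independence, from the same restart argument that gave memorylessness. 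Once independence is in hand, $\mathbb{P}(Y_1=x_j\mid H_1\le\Delta t)$ equals the constant $\mathbb{P}(Y_1=x_j)$ for every $\Delta t>0$, and your short-time asymptotics evaluate that constant as $q_{ij}/q_i$. (Your $o(\Delta t)$ bound on two or more jumps is legitimate in this ordering, since exponential holding times with rates bounded by $\max_k q_k$ are already established at every state before the identification step; this is needed, because the bound is not available before exponentiality is known.) This repair matters for the full statement, not just for cosmetics: the claimed joint law --- holding times conditionally independent exponentials with parameters $q_{Y_0},q_{Y_1},\dots$ given the chain --- requires precisely this per-step factorization of destination and waiting time at every $J_n$. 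With the order of these two steps exchanged, your proof is complete.
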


Using this decomposition for sampling, i.e. drawing the next state from the Markov chain $Y_t$ and the exponentially distributed holding time $H_n$ leads to the well known Gillespie (Stochastic Simulation) Algorithm \cite{gillespie1977exact} for sampling from Markov Jump chains.

%\newpage{}

\section{The augmented jump chain}

In this section we describe the construction of the main object of this study, the \emph{augmented jump chain} for non\hyp{}autonomous processes. Similar to the jump chain of autonomous processes, we decompose the process into its spatial and temporal parts respectively by conditioning either on a specific time or location. Unlike in the autonomous regime however, both parts now explicitly depend on time. By combining both components, i.e. augmenting the spatial with the temporal component, we arrive at an autonomous process on space-time, represented by a new transfer operator, the \emph{jump operator} $\JumpOp$, encoding the original process $X_t$. We then show how to use this operator to reconstruct the classical, non\hyp{}autonomous transfer operators $\prop^{s,t}$, $\koop^{s,t}$ and discuss a more general application for time\hyp{}dependent committors.

\subsection{Construction}

\begin{definition}
Define the \emph{augmented jump chain} to be the tuple
\begin{equation}
\label{eqn:ajc}
    (Y,J)_n = (Y_n, J_n)_n \quad \text{for } n=0,1,2,...
\end{equation}
where the jump chain and jump times are defined as in  \cref{definition:processes}.
\end{definition}

We call this the \emph{augmented} jump chain since its state space is that of the original process $X_t$ (or its jump chain $Y_n$) augmented by the time component.
Note however that unlike in classical augmentation schemes (e.g. the augmentation of non\hyp{}autonomous differential equation) the  ``internal'' time component $J_n$ does not evolve linearly with the ``external'' time $n$ of the augmented jump chain.

The augmented jump chain now gives us a tool to analyse the time\hyp{}continuous spatially-discrete Markov process $X_t$ by means of a discrete-time Markov chain $(Y,J)_n$ on the product space $\SS$, i.e. to look at the process on a per-jump basis.
Analogue to the autonomous case we can transfer forth and back between the two representations, either by the definition of the augmented Markov chain \cref{eqn:ajc} or the evaluation of the jump chain \cref{eqn:reconstructx} at the time\hyp{}corresponding jump counts \cref{eqn:jumpcount}.

Due to the time dependent structure of the process $X_t$ the transition rules change compared to the autonomous case (\cref{thm:aut}):

\begin{theorem}
\label{thm:ajc}
The \emph{augmented jump chain} $(Y,J)_n$ is a time\hyp{}homogeneous\slash{}autonomous Markov chain on $\ST$ with transition kernel
\begin{equation}
\label{eqn:jumpkernel}
    k(x_i, s, x_j, t) = \tilde q_{ij}(t) q_i(t) \exp\left({-\int_{s}^{t} q_i(u) \dd u}\right)
\end{equation}
for $s<t$ or $k=0$ otherwise with $\tilde q_{ij}(t)$ being defined as the time\hyp{}dependent equivalents of eq. \cref{eqn:jumpprops}. \\
The corresponding transfer operator is given by the \emph{jump operator} $\JumpOp: L^1(\ST) \rightarrow L^1(\ST) $
\begin{equation}
    \left[\JumpOp\rho\right] (y,t)= \int_T \sum_{x \in \SS} k(x, s, y, t) \rho (x, s) \mathrm{d}s
\end{equation}
and its \emph{adjoint} $\JumpAd: L^\infty (\ST) \rightarrow L^\infty (\ST) $ by
\begin{equation}
    \left[\JumpAd\rho\right] (x,s)= \int_T \sum_{y \in \SS} k(x, s, y, t) \rho (y, t) \mathrm{d}t.
\end{equation}

\end{theorem}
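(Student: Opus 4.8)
The plan is to reduce everything to the one-jump transition law of $X_t$, which by the strong Markov property of the original process is all that is needed to identify $(Y,J)_n$ as a homogeneous Markov chain and to read off its kernel.

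First I would establish the Markov property. Each jump time $J_n$ is a stopping time for $X_t$, so the strong Markov property tells us that, conditioned on $J_n = s$ and $Y_n = X_{J_n} = x_i$, the post-jump trajectory $\{X_u : u \ge s\}$ is independent of the pre-jump history and is distributed as the process started afresh in $x_i$ at time $s$. In particular the next pair $(Y_{n+1}, J_{n+1})$ depends on the past only through $(x_i, s)$. Since this dependence is expressed entirely through the current augmented state and the target state, and never through the index $n$, the chain $(Y,J)_n$ is a homogeneous Markov chain on $\ST$: the time-dependence of $X_t$ has been absorbed into the state, leaving an autonomous chain whose single kernel $k(x_i, s, x_j, t)$ governs every transition.

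The core of the argument is the explicit form of that kernel, which I would obtain by factoring the transition density as the product of a holding-time density and a conditional jump target. Writing $p(t)$ for the probability of no jump in $(s,t]$ given the process sits in $x_i$ at time $s$, the generator yields $\PP(X_{u+\Delta u} = x_i \mid X_u = x_i) = 1 - q_i(u)\Delta u + o(\Delta u)$, whence $p$ satisfies the linear ODE $\tfrac{\dd}{\dd t} p(t) = -q_i(t)\, p(t)$ with $p(s)=1$; solving it gives the survival factor $p(t) = \exp(-\int_s^t q_i(u)\,\dd u)$. Differentiating $-p$ produces the density of $J_{n+1}$ at time $t$, namely $q_i(t)\, p(t)$. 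Conditioned on a jump occurring at time $t$, the probability of landing in $x_j$ is the instantaneous embedded probability $\tilde q_{ij}(t) = q_{ij}(t)/q_i(t)$, the time-dependent analogue of \cref{eqn:jumpprops}. Multiplying the two factors reproduces
\begin{equation*}
k(x_i, s, x_j, t) = \tilde q_{ij}(t)\, q_i(t) \exp\left(-\int_s^t q_i(u)\,\dd u\right),
\end{equation*}
valid for $s<t$ and zero otherwise, since jumps only advance time.

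Finally, with the kernel in hand the two operators are essentially definitional: $\JumpOp$ is the Perron-Frobenius operator of the chain, pushing a density $\rho(x,s)$ forward by summing and integrating the kernel against the source augmented state $(x,s)$, while $\JumpAd$ is its adjoint, pulling an observable back by integrating over the target $(y,t)$, the duality $\langle \JumpOp\rho, \sigma \rangle = \langle \rho, \JumpAd \sigma \rangle$ following from Fubini. The main obstacle I anticipate is the rigorous justification of the survival ODE in the time-inhomogeneous setting: unlike the autonomous case of \cref{thm:aut}, where holding times are genuinely exponential, here the outbound rate varies across the holding interval, so one must argue that the first-jump law depends only on the instantaneous rate $q_i(u)$ along the path --- that the process cannot silently leave and return --- and control the $o(\Delta u)$ terms uniformly in order to pass from the infinitesimal description to the integrated exponential.
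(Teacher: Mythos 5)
Your proposal is correct and follows essentially the same route as the paper: both factor the one-step transition law into the inhomogeneous holding-time density $q_i(t)\exp\left(-\int_s^t q_i(u)\,\mathrm{d}u\right)$ and the embedded jump probability $\tilde q_{ij}(t)$ conditioned on the jump time, with $k=0$ for $s\ge t$. The only differences are matters of justification rather than structure: you obtain the survival factor by solving the ODE $p'(t)=-q_i(t)\,p(t)$ derived from the generator and ground the homogeneity of $(Y,J)_n$ in the strong Markov property at the stopping times $J_n$, whereas the paper cites its appendix on the non-homogeneous exponential (hazard) distribution and leaves the Markov/homogeneity claim implicit.
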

\begin{proof}
Since $J_{n+1} > J_n$ by definition we have $k = 0$ for $s\ge t$. Let us therefore consider the case of and $s < t$.

Since $X_t$ is Markovian, the jump location at a specific jump time depends solely on the generator at that time, so similar to the autonomous case we have
\begin{equation}
    \PP \left( Y_{n+1} = x_j \mid Y_n = x_i, J_{n+1} = t \right) = \tilde q_{ij}(t)
\end{equation}

Unlike in the autonomous case the jump times now depend on the time\hyp{}dependent rates. We therefore replace the homogeneous exponential distribution with its non\hyp{}homogeneous complement, which is also known as the risk of mortality\slash{}hazard function (c.f. appendix):
\begin{equation}
    \PP \left (J_{n+1} = t \mid J_n = s, Y_n = x_i \right) = q_i(t) \exp \left(- \int_s^t q_i(u) \dd u\right)
\end{equation}

Putting these together, we end up with the desired result
\begin{equation}
\begin{aligned}
 & k(x_i, s, x_j, t) \\
     &= \PP (Y_{n+1}=x_j, J_{n+1} = t \mid Y_n = x_i, J_n = s) \\
    & = \PP(Y_{n+1}=x_j\mid J_{n+1} = t, Y_n = x_i, J_n=s) 
     \PP(J_{n+1}=t\mid Y_n = x_i, J_n = s) \\
     &= \tilde q_{ij}(t) q_i(t) \exp{\left(-\int_{s}^{t} q_i(t) \dd u\right)}. 
\end{aligned}
\end{equation}

\end{proof}

The given theorem gives allows us to sample realizations of the augmented jump chain by successively generating samples from the probability density \begin{equation}
    (Y_{n+1}, J_{n+1}) \sim k(Y_n, J_n, Y_{n+1}, J_{n+1})
\end{equation} by drawing the jump time from the inhomogeneous exponential distribution followed by the jump location from the embedded Markov chain at that time.
This procedure for sampling from time\hyp{}dependent Markov Jump processes is also known as the temporal Gillespie algorithm \cite{vestergaard2015temporal}.

Having the transition kernel it is natural to look at the associated transfer operators which in this case evolve space-time densities. In the following subsections we will show how they enable us to reconstruct the transfer operators $\koop, \prop$ of the original process $X_t$.

Let us denote all space-time distributions $\rho \in L^1(\ST) $ which have all their mass at a single time\hyp{}slice $t_0$ as \emph{spacelike}.
Given some spacelike initial distribution $\rho$ for the augmented jump chain $(Y_0, J_0) \sim \rho$ its subsequent space-time states are distributed according to
$$ (Y_{n}, J_{n}) \sim \JumpOp^n \rho.$$

%\newpage
\subsection{Reconstruction of the Propagator}
\label{sec:prop}

The application of the jump operator $\JumpOp$ to a spacelike initial density $\rho$ returns the density of the locations of its next jump events in space-time. Whilst the initial density's location in time was fixed by construction, its image under $\JumpOp$, i.e. the location of the next jump, is spread out in time; one may regard the result as desynchronized.
This leads to the question what can be said about the distribution at a a future fixed time\hyp{}slice $\SS \times \{t\}$.
Starting from the jump-activity, the superposition of \emph{all} subsequent jumps, and accounting for the probability to remain in place (i.e. not jump) until the target time we return to the synchronized view by reconstructing the classical propagator $\prop$ from the augmented jump chain.

\begin{definition}
The jump-activity $E: \ST \rightarrow \SS$ is given by 
\begin{equation}
\label{eqn:jumpact}
    Ef := \sum_{n=0}^\infty \JumpOp^n f.
\end{equation}
\end{definition}

Starting with a spacelike distribution $f$, the corresponding jump-activity $Ef$ is the density of all induced jump events, similar to the activity of a Geiger-counter over time. In the general case $Ef$ can be interpreted as the density of jumps induced by a superposition of spacelike distributions.

Note that $E$ admits the form of a Neumann-series, i.e. $E = (Id - \JumpOp)^{-1}$.

\begin{definition}
Define the survival probability from time $t_0$ to time $t_1$ at point $x_i\in \SS$ as 
\begin{equation} 
S(x_i, s, t) :=  \PP [J_{n+1} > t | Y_n = x_i, J_n = s] = \exp\left(-\int_{s}^{t} q_i(u) \dd u\right).
\end{equation}
Define the \emph{synchronization operator} $S^t: L_1(\ST) \rightarrow L_1(\SS)$ at time $t$ by:
\begin{equation}
\label{eqn:sync}
    \mathcal{S}^t f(y) = \int_{s \le t} f(y,s) S(y,s,t) \dd s
\end{equation}
\end{definition}
The synchronization operator takes a space-time density and projects it onto a specific time by weighting each point with its probability to survive until that time. Starting from a space-like density we are now in the position of constructing all consequent jumps and synchronizing them to a specific time, thereby reconstructing the action of the classical propagator Perron-Frobenius operator:

\begin{theorem}
Let $\bar{f} \in L_1(\SS)$ and $f(x,t) = \delta(t) \bar f(x) \in L_1(\ST)$ its spacelike embedding. The measurement operator $\MeasOp^t = \mathcal{S}^t E$ reconstructs the action of the classical propagator $\prop$, i.e.
\begin{equation}
\MeasOp^t f = \mathcal{S}^t E f= \prop^{0,t} \bar f
\end{equation}
\end{theorem}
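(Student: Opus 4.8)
The plan is to prove the operator identity by translating both sides into their probabilistic content and matching them term by term in the jump index $n$. First I would rewrite the left-hand side via the definition of the classical propagator as a transition probability: for the spacelike initial condition $X_0 \sim \bar f$ one has $[\prop^{0,t}\bar f](y) = \PP(X_t = y)$. The central observation is the pathwise reconstruction $X_t = Y_{c(t)}$ from \cref{eqn:reconstructx}, which partitions the event $\{X_t = y\}$ according to how many jumps have occurred by time $t$:
\[
\{X_t = y\} = \bigsqcup_{n=0}^\infty \{Y_n = y,\ J_n \le t < J_{n+1}\}.
\]
This is a \emph{disjoint} union because the jump count $c(t)$ takes exactly one value on each realization, so summing gives $\PP(X_t = y) = \sum_{n\ge 0}\PP(Y_n = y,\ J_n \le t < J_{n+1})$.

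Next I would evaluate each term by conditioning on the space-time location $(Y_n, J_n) = (y, s)$ of the $n$-th jump. By the Markov property at the jump time together with the definition of the survival probability $S$, one has $\PP(J_{n+1} > t \mid Y_n = y, J_n = s) = S(y, s, t)$, so each term factorizes into the density of arriving at $(y, s)$ after exactly $n$ jumps, weighted by the probability of then surviving at $y$ until $t$. Here I would invoke the fact, recorded just before this section, that started from the spacelike embedding $f$ the distribution of $(Y_n, J_n)$ is precisely $\JumpOp^n f$; in particular the $n=0$ term reproduces the no-jump contribution $\bar f(y)\,S(y,0,t)$ through the Dirac mass at $s = 0$. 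This yields
\[
\PP(Y_n = y,\ J_n \le t < J_{n+1}) = \int_{s \le t} [\JumpOp^n f](y, s)\, S(y, s, t)\,\dd s.
\]

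Finally I would sum over $n$ and interchange the summation with the integral, which is legitimate by Tonelli's theorem since all integrands are non-negative densities. Recognizing $\sum_{n\ge 0}\JumpOp^n f = Ef$ from the Neumann-series form of the jump-activity, and the outer $s$-integral weighted by $S$ as the synchronization operator $\mathcal{S}^t$, gives
\[
\PP(X_t = y) = \int_{s \le t} [Ef](y,s)\, S(y,s,t)\,\dd s = [\mathcal{S}^t E f](y),
\]
which is the claimed identity $\MeasOp^t f = \prop^{0,t}\bar f$.

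I expect the main obstacle to be the conditioning step: one must appeal to the strong Markov property at the random jump time $J_n$ to guarantee that, given $(Y_n, J_n) = (y,s)$, the survival event $\{J_{n+1} > t\}$ depends on the past only through $(y,s)$ and is governed by $S(y,s,t)$. This independence is exactly what decouples the arrival after $n$ jumps from the survival until $t$, and thereby lets the sum telescope cleanly into the factor $E$ and the factor $\mathcal{S}^t$. The distributional nature of the initial embedding $f(x,t)=\delta(t)\bar f(x)$ also deserves a careful word in the $n=0$ term, though it integrates without trouble against the survival weight.
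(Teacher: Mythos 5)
Your proposal is correct and follows essentially the same route as the paper's own proof: decomposing $\PP(X_t = y)$ by the jump count $n$ into events $\{Y_n = y,\, J_n \le t < J_{n+1}\}$, conditioning on the $n$-th jump's space-time location to factor out the survival probability $S$, identifying the law of $(Y_n, J_n)$ with $\JumpOp^n f$, and summing into $E$ followed by $\mathcal{S}^t$. Your additional care about disjointness of the union, Tonelli's theorem for the interchange, and the strong Markov property at $J_n$ only makes explicit what the paper leaves implicit.
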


\begin{proof}

The probability to be in point $x$ at time $t$ is equal to the sum of the probabilities to jump to $x$ just before time $t$ for every jump time $n$:

\begin{equation}
\begin{aligned}
\left(\prop^{0,t}\bar f\right)(x) &= \PP[X_t = x \mid X_0 \sim \bar f] \\
 &= \sum_{n=0}^{\infty} \PP[Y_n = x, J_n \le t < J_{n+1} \mid Y_0 \sim \bar f, J_0 = 0] \\
\end{aligned}
\end{equation}
which can be further decomposed to jumping to $s$ and staying there
\begin{equation}
\begin{aligned}
\left(\prop^{0,t}\bar f\right)(x)
 &= \sum_{n=0}^{\infty} \int_{s\le t} \bigg( \PP\left[Y_n = x, J_n = s \mid Y_0 \sim \bar f , J_0 = 0\right] \cdot  \\
 &\phantom{= \sum_{n=0}^{\infty} \int_{s\le t} \bigg(, } \PP\left[J_{n+1} > t\mid Y_n = x, J_n = s\right] \bigg) \dd s\\
 &= \sum_{n=0}^{\infty} \int_{s\le t} \JumpOp^n  f (x,s) S(x,s,t) \dd s\\
 &= \int_{s\le t}  E f (x,s) S(x,s,t) \dd s \\
 &= \mathcal{S}^t E f
\end{aligned}
\end{equation}

\end{proof}

\subsection{Reconstruction of the Koopman operator}
Instead of solving the propagator directly by computing all possible jumps, as done in the section above, we can solve for the transition kernel of the process $X_t$ with a single jump.
Similar to the Kolmogorov backward equation we will transport the transition kernel $k(x,s,y,t)$ for fixed $y,t$ backwards in time.
This enables us to obtain the propagator by solving a family of boundary value problems.
Furthermore we can compute its adjoint, the Koopman operator, by solving just a single boundary value problem (BVP).

\newcommand{\f}{f^{y,t}}
\newcommand{\ff}{\f(x,s)}
\begin{theorem}
Let

\begin{equation}
    f^{y,t}(x,s) := \PP (X_t = y \mid X_s = x).
\end{equation}
Then $f^{y,t}$ satisfies the inhomogeneous linear boundary value problem 
\begin{equation}
\label{eqn:kolmogorov}
\begin{alignedat}{3}
f^{y,t}(x,s) &= \JumpAd f^{y,t}(x,s) + S(x,s,t) \delta_{xy}, \quad && \text{for }s<t\\
f^{y,t}(x,s) &= \delta_{xy}, && \text{for } s=t. 
\end{alignedat}
\end{equation}
with $\delta_{xy}$ denoting the Kronecker delta.
\end{theorem}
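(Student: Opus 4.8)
The plan is to derive \cref{eqn:kolmogorov} by a first-jump (renewal) decomposition of the transition probability $f^{y,t}(x,s)=\PP(X_t=y\mid X_s=x)$, which is nothing but the integrated form of the Kolmogorov backward equation expressed through the jump kernel of \cref{thm:ajc}. The boundary condition at $s=t$ is immediate, since $f^{y,t}(x,t)=\PP(X_t=y\mid X_t=x)=\delta_{xy}$. The work is therefore concentrated in the case $s<t$.

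For $s<t$ I would condition on the behaviour of the process immediately after time $s$. Given $X_s=x$, exactly one of two mutually exclusive events occurs: either the process makes no jump in $(s,t]$, or it makes a first jump at some time $\tau\in(s,t]$ to some intermediate state $z$. By the Markov property, the law of the first jump after $s$ given $X_s=x$ does not depend on how long the process has already resided in $x$ and is governed by precisely the jump kernel $k$ of \cref{thm:ajc}. Hence the no-jump event has probability $S(x,s,t)$ and, forcing $X_t=x$, contributes $S(x,s,t)\,\delta_{xy}$; the first-jump event has density $\tilde q_{xz}(\tau)\,q_x(\tau)\,S(x,s,\tau)$, after which the (strong) Markov property lets the process restart afresh at $(z,\tau)$ and reach $y$ at $t$ with probability $f^{y,t}(z,\tau)$.

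Summing over the intermediate state and integrating over the first-jump time then yields
\begin{equation*}
f^{y,t}(x,s)=S(x,s,t)\,\delta_{xy}+\int_s^t\sum_{z\in\SS}\tilde q_{xz}(\tau)\,q_x(\tau)\,S(x,s,\tau)\,f^{y,t}(z,\tau)\,\dd\tau .
\end{equation*}
The final step is to recognize the integrand as the jump kernel $k(x,s,z,\tau)$, so that the integral term is exactly $\JumpAd f^{y,t}(x,s)$, giving the claimed identity.

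The hard part will be the bookkeeping at the endpoints when matching the decomposition to $\JumpAd$, whose defining integral runs over all of $\TT$. Two points need care: the jump kernel vanishes for $\tau\le s$, so the lower limit is automatic, but one must also argue that values $f^{y,t}(z,\tau)$ with $\tau>t$ do not contribute. I would handle this by posing the boundary value problem on the domain $\{s\le t\}$ and extending $f^{y,t}$ by zero beyond $\tau=t$ (consistent with the boundary condition), so that the $\JumpAd$ integral collapses to $\int_s^t$. The remaining subtlety is to make the appeal to the Markov property precise, i.e.\ to justify that conditioning on $X_s=x$ — rather than on having \emph{just} jumped to $x$ at time $s$ — produces the identical first-jump law; this follows because the instantaneous rates $q_x(\cdot)$ and $\tilde q_{xz}(\cdot)$ depend only on the current state and time, so the residual sojourn distribution is memoryless.
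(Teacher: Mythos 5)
Your proposal is correct and follows essentially the same route as the paper: a first-jump decomposition via the law of total probability into the no-jump event (contributing $S(x,s,t)\,\delta_{xy}$) and the event of a first jump to $(z,\tau)$ governed by the kernel $k$ of \cref{thm:ajc}, with the Markov property used to restart and recognize $\JumpAd f^{y,t}$; the paper phrases this through the jump count $c(t)$ and the homogeneity of the augmented chain $(Y,J)_n$, which is only a notational difference. If anything, you are more explicit than the paper on the two delicate points it leaves implicit, namely the truncation of the $\JumpAd$ integral at $\tau=t$ and the memorylessness justifying that conditioning on $X_s=x$ gives the same first-jump law as conditioning on a jump into $x$ at time $s$.
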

\begin{proof}
Define 
\begin{equation}
    c(t) = \max \{n \mid J_n \le t\}
\end{equation}
to be the last index of the jump chain before crossing time $t$.
Using the law of total probability we see that we can decompose the probability $\f$ into the cases of either jumping or staying 
\begin{equation}
\begin{alignedat}{2}
    \ff &=  \PP(X_t = y \mid X_s = x) =  \PP(Y_{c(t)} = y \mid Y_0 = x, J_0 = s) \\
    & = \PP(Y_{c(t)} = y, c(t) = 0 \mid Y_0 = x, J_0 = s) \\
    & + \PP(Y_{c(t)}=y, c(t) > 0 \mid Y_0 = x, J_0 = s) \\
\end{alignedat}
\end{equation}
The first part reduces to
\begin{equation}
\begin{aligned}
&\PP(Y_{c(t)} = y, c(t) = 0 \mid Y_0 = x, J_0 = s) \\
     &= \PP(c(t) > 0 \mid Y_0 = x, J_0 = s) \, \PP (Y_{c(t)} \mid Y_0 = x, J_0 = s, c(t) = 0) \\
        &= S(x,s,t) \delta_{xy} 
\end{aligned}
\end{equation}
For the second part, since $c(t)>0$, we can decompose the jump event as
\begin{equation}
\begin{aligned}
    &\PP(Y_{c(t)}=y , c(t) > 0 \mid Y_0 = x, J_0 = s) \\
    =  &\int \PP (Y_{c(t)} = y \mid Y_1 =z, J_1 = u) \PP(Y_1 = z, J_1 = u \mid Y_0 = x, J_0 = s) \dd z \dd u \\
    =  &\int \PP (Y_{c(t)} = y \mid Y_1 =z, J_1 = u) k(x,s,z,u) \dd z \dd u \\
    =  & \JumpAd \f(x,s)
\end{aligned}
\end{equation}
where the last equality follows from
\begin{equation}
    \begin{aligned}
        \f(z,u) &= \PP(Y_{c(t)} = y \mid Y_{0} = z, J_{0} = u) \\
        &= \PP(Y_{c(t)} = y \mid Y_{1} = z, J_{1} = u)
    \end{aligned}
\end{equation}
which holds due to $(Y,J)$ being homogeneous.

Putting it all together and treating the special case of $s=t$ implying $c(t)=0$ we arrive at the stated boundary value problem.
\end{proof}

Since $\f$ is just the transition kernel \labelcref{eqn:transkernel} of the original process for fixed $(y,t)$, i.e. 
\begin{equation}
    \ff = k(x,s,y,t)
\end{equation}
we can represent the propagators $\prop$ and the Koopman operators $\koop$ in terms of $\f$ as
\begin{equation}
\begin{aligned}
    \prop^{s,t} g(y) &= \int \ff g(x) \dd x, \\
    \koop^{s,t} g(x) &= \int \ff g(y) \dd y. 
\end{aligned}  
\end{equation}

Note that the evaluation of the propagator requires the solution of the BVP \labelcref{eqn:kolmogorov} for each $y$, which corresponds to solving for the fundamental matrix of the system.
The evaluation of the Koopman operator on the other hand can be computed by solving a single BVP:

\begin{corollary}
\label{cor:koop}
The evaluation of the Koopman operator $K(x,s) = \koop^{s,t}g(x)$ 
satisfies the inhomogeneous linear boundary value problem
\begin{align}
\label{eqn:bvp}
\begin{alignedat}{3}
K(x,s)&= \JumpAd K(x,s) + S(x,s,t) g(x), \quad && \text{for }s<t\\
K(x,t)&= g(x), && \text{for } s=t.
\end{alignedat}
\end{align}
\end{corollary}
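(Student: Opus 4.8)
The plan is to derive the boundary value problem for $K$ directly from \cref{eqn:kolmogorov}, the boundary value problem satisfied by $\f$, by integrating it against the observable $g$. The starting point is the identity $\koop^{s,t} g(x) = \int \ff\, g(y)\,\dd y$ recorded just before the corollary, so that $K(x,s) = \int \ff\, g(y)\,\dd y$; the entire task is to show that this integrated quantity inherits the BVP structure of $\f$.

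First I would take the interior equation $\ff = \JumpAd\f(x,s) + S(x,s,t)\delta_{xy}$, valid for $s<t$, multiply by $g(y)$, and integrate over $y$. The left-hand side is $K(x,s)$ by definition, while the inhomogeneous term collapses through the sifting property of the Kronecker delta, $\int S(x,s,t)\delta_{xy}\,g(y)\,\dd y = S(x,s,t)\,g(x)$, which already produces the source term of \cref{eqn:bvp}. The crucial step is the remaining term $\int \JumpAd\f(x,s)\, g(y)\,\dd y$: since $\JumpAd$ acts only on the $(x,s)$ slot while $y$ is an independent integration variable, I would expand the definition of $\JumpAd$ and exchange the $y$\hyp{}integration with the time\hyp{}integration and the finite spatial sum, giving
\begin{equation*}
    \int \JumpAd\f(x,s)\, g(y)\,\dd y = \int \sum_{z\in\SS} k(x,s,z,u)\left(\int \f(z,u)\,g(y)\,\dd y\right)\dd u = \JumpAd K(x,s),
\end{equation*}
where the inner integral equals $\int \f(z,u)\,g(y)\,\dd y = K(z,u)$. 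Collecting the three contributions yields the interior equation $K(x,s) = \JumpAd K(x,s) + S(x,s,t)\,g(x)$.

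For the boundary condition I would integrate the terminal equation $\f(x,t)=\delta_{xy}$ against $g$, obtaining $K(x,t) = \int \delta_{xy}\, g(y)\,\dd y = g(x)$, exactly the $s=t$ condition of \cref{eqn:bvp}. The only point demanding care is the interchange of the $y$\hyp{}integral with the operator $\JumpAd$; this is a Fubini argument, and since $\SS$ is finite and $k$ is a bounded probability kernel the double integral is absolutely convergent, so the exchange is routine rather than a genuine obstacle. The substantive content of the corollary is thus the observation that, by linearity of $\JumpAd$, the single boundary value problem for the kernel $\f$ converts — in one integration against $g$ — into a single boundary value problem for the Koopman evaluation $K$, avoiding the family of problems needed for the full propagator.
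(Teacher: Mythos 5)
Your proposal is correct and follows essentially the same route as the paper's own proof: integrating the boundary value problem \labelcref{eqn:kolmogorov} for $\f$ against $g$ over $y$ and using the linearity of $\JumpAd$ to pull the integration inside the operator. The paper states this in one line; your version merely spells out the exchange of integration with $\JumpAd$ and the sifting of the Kronecker delta, which is a harmless elaboration rather than a different argument.
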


\begin{proof}
This follows immediately from
\begin{equation}
    K(x,s) = \int f^{y, t}(x,s) g(y) \dd y
\end{equation}
by integration of the product of BVP \labelcref{eqn:kolmogorov} and $g$ over $y$ and the linearity of $\JumpAd$.
\end{proof}

\subsection[Connections to committor functions]{Connections to committor functions\footnotemark}

\footnotetext{\label{foot}These section are not special to the augmented jump chain but work similarly in the classical time\hyp{}augmented setting, albeit the jump chain may allow for sparse formulations (see \cref{sec:sparsity}).}

Formally the approach above is very similar to the computation of \emph{committor functions} $c(x)$ giving the probability to hit some set $A$ before some other set $B$ conditioned on starting in $x$.
Classically the stationary committor function for the sets $A,B \subset \SS$ is the function $c: \SS \rightarrow [0,1]$ satisfying the boundary value problem 
\begin{equation}
    c = \koop^t c, \quad \text{in } \SS \setminus (A \cup B)
\end{equation}
with prescribed boundary values $\left.c\right|_A\equiv 1$ and $\left.c\right|_0 \equiv 1$ \cite{schuttesarich}.
This approach was recently extended to non\hyp{}autonomous dynamics for finite-time and periodic systems \cite{helfmann2020extending}.
Generalizing furthermore to time\hyp{}dependent target sets it may be useful to think of \emph{committor functions on space-time}.

Indeed the Koopman operator applied to an indicator function of some set $G \subset \SS$ can then be interpreted as such a generalized committor function $K(x,s)$, i.e. the probability to hit space-time set $A = G \times \{t\}$ before $ B = \SS \backslash G \times \{t\}$ (see sketch 3 of \cref{fig:committor}):
\begin{equation}
    K(x,s) = \koop^{s,t} \Ind_G.
\end{equation}

By generalizing the BVP \cref{eqn:bvp} to a wider class of boundary values, we may be able to compute such \emph{non\hyp{}autonomous committors}, i.e. committors of non\hyp{}autonomous systems with time\hyp{}dependent target sets.
Solutions to these equations will still satisfy the correct propagation of probability according to the law of the process. 
Choosing appropriate space-time boundary sets $A$ and $B$ may then allow to compute many interesting quantities such as the stationary committor, finite-time hitting probabilities or arbitrary space-time committors by solving the corresponding linear problem (\cref{fig:committor}).

\begin{figure}
    \centering
\begin{tikzpicture}
\draw [<->,thick] (0,2) node (yaxis) [above] {$\SS$}
        |- (2,0) node (xaxis) [right] {$\TT$};
\draw 
        (0,.7) -- ++(1.5,0) edge[dashed] ++(.5,0) node[below,pos=.5] {A}
        (0,.2) -- ++(1.5,0) edge[dashed] ++(.5,0);
\draw 
        (0,1.7) -- ++(1.5,0) edge[dashed] ++(.5,0) node[below,pos=.5] {B}
        (0,1.2) -- ++(1.5,0) edge[dashed] ++(.5,0);

\begin{scope}[shift={(3,0)}]
    \draw [<->,thick] (0,2) node (yaxis) [above] {$\SS$}
        |- (2,0) node (xaxis) [right] {$\TT$};
    \draw 
            (0,.7) -- ++(1.5,0) node[below,pos=.5] {A}
            (0,.2) -- ++(1.5,0);
    \draw 
            (1.5,0) -- ++(0,1.5) edge[dashed] ++(0,.5) node[right,pos=.7] {B};
\end{scope}
  
\begin{scope}[shift={(6,0)}]
    \draw [<->,thick] (0,2) node (yaxis) [above] {$\SS$}
        |- (2,0) node (xaxis) [right] {$\TT$};
    \draw 
            (1.5,.7) -- ++(.5,0) node[below,pos=.5] {A}
            -- ++(0,-.5) -- ++(-.5,0);
    \draw 
            (1.5,0) -- ++(0,1.5) edge[dashed] ++(0,.5) node[right,pos=.7] {B};
\end{scope}
  
\begin{scope}[shift={(9,0)}]
    \draw [<->,thick] (0,2) node (yaxis) [above] {$\SS$}
        |- (2,0) node (xaxis) [right] {$\TT$};
    \draw (.7,1.5) circle [x radius=.6, y radius=.3]
            node {A};
    \draw (.7,.5) circle [x radius=.6, y radius=.3]
            node {B};
    \draw 
            (1.5,0) -- ++(0,1.5) edge[dashed] ++(0,.5) node[right,pos=.7] {B};
\end{scope}

\end{tikzpicture}
    \caption[]%
    {Sketches of sets for space-time committors \par \small
    By choosing suitable space-time sets A and B we can construct different interesting committor-like objects.
    From left to right: The classical (stationary) committor, a finite-time hitting probability, fixed time hitting probability (Koopman operator), (fully) non\hyp{}autonomous committor.}
    \label{fig:committor}
\end{figure}

\subsection[Connections to coherence]{Connections to coherence\cref{foot}}
In the context of stationary Markov processes, metastabilities, that is regions of space $A \subset \SS$ which are almost-invariant under time\hyp{}evolution, 
\begin{equation}
    \koop \Ind_A \approx \Ind_A
\end{equation}
have proven to be a very useful notion for gaining understanding as well as dimensionality reduction of the system.

Extending this approach to the time\hyp{}dependent regime the analogue to metastability is given by coherence \cite{koltai2016metastability}.
A set $A \subset \SS$ is \emph{forward-backward coherent} if there exists a set $B \subset \SS$ such that
\begin{equation}
\koop^{s,t} \Ind_A \approx \Ind_B \text{ and } \koop_-^{t,s} \Ind_B \approx \Ind_A
\end{equation}
where $\koop_-^{t,s}$ is the appropriately defined Koopman operator of the backward process. This definition asserts that $A$ stays "coherent" under time\hyp{}evolution from $s$ to $t$ in the sense that the space-regions $A$ and $B$ at times $s$ resp. $t$ have an almost-certain one to one correspondence.
Note that forward-backward coherence also implies that (almost) no mass in set $B$ came from outside of set $A$.

The augmented jump chain naturally gives rise to a further possible notion of coherence in terms of almost-invariant space-time regions:
\begin{equation}
    \JumpOp^\dagger \Ind_C \approx \Ind_C, \quad C\subset \ST
\end{equation}
Whilst this only implies what we would call forward coherence this notion may suffice for many applications and a similar construction involving a backward operator to study forward-backward coherence should pose no difficulties.

Moreover we can formally introduce a probabilistic notion of coherence in the form of coherent functions:
\begin{definition}
Let $f: \ST \rightarrow [0,1]$. We call f a \emph{forward coherent function} if it satisfies
\begin{equation}
\label{eqn:cohfun}
    \JumpOp^\dagger f \ge f.
\end{equation}
\end{definition}
The coherent function $f$ allows for the interpretation as a probability density for a space-time region belonging to (observing) the coherent regime described by $f$. If a point has high density, i.e. probably belongs to the coherent regime, this probably will not decrease with the temporal evolution, i.e. it will likely stay in that coherent regime.

We easily see that these functions are not unique by adding a probability in "the future", e.g. $f'(x,s) = f(x,s)$ if $s<T$ and $f'(x,s) = 1$ otherwise.
This however weakens the notion of the corresponding coherent regime, since from time $T$ anything belongs to it.
So there is a whole family of coherent functions and depending on the context they may allow to model many requirements leading to optimization problems such as for example finding the spatially "most concentrated" coherent function losing the least amount of mass per time or the "most certain" function coming from some source and hitting some target region in space-time and many more.

Moreover due to its integral approach of time the augmented jump chain allows not only for coherence with respect to fixed starting- and end times but may allow to find coherent regimes for the intrinsic time scales of the process. 
Finally it might be interesting to decompose the space-time into coherent regimes to obtain a coarse-grained description of the system.

\newpage{}

\section{Numerical discretization}
The jump operator acts on the space-time $\ST$ which due to the continuity of time is an infinite space. In order to allow for numerical computations we will discretize the space-time $\ST$ and the jump operator $\JumpOp$. In the case of spatially sparse generators their sparsity will carry over to the matrix representation of $\JumpOp$.

A straightforward approach would be to discretize time into $M$ intervals $T_l := [t_{i-1}, t_i)$. One could then compute the transition probabilities.
\begin{equation}
    \PP(Y_{n+1} = x_j, J_{n+1} \in T_l \mid Y_{n} = x_i, J_n = t_k).
\end{equation}
Note however that we had to assume a fixed starting point $(t_k)$, since we have lost the information about the distribution inside an interval.
One can interpret this as shifting all the particles that jump into a time\hyp{}interval to the beginning of that interval. In order to compensate for that error we will work with an Galerkin discretization onto indicator functions of these intervals (also called Ulam discretization):

\subsection{Ulam-Galerkin projection}
\label{sec:galerkin}

\begin{definition}
Partition the finite time\hyp{}interval  $[0, T]$ into $M$ disjoint intervals $T_k := (t_{k-1}, t_k]$ of size $\Delta T_k = |T_k|$, with $t_0 = 0, t_M = T$.
Define $\hat \JumpOp: L^2(\mathbb{U}) \rightarrow L^2(\mathbb{U})$ to be the Galerkin projection of $\JumpOp$ onto $\mathbb{U} = \text{span}\left\{ \Ind_{il} \right\}_{1\le i \le N ; \, 1 \le l \le M}$:
\begin{equation}
    \hat \JumpOp_{ikjl} := \frac{\left< \Ind_{jl}, \JumpOp \Ind_{ik} \right>}{\left< \Ind_{ik}, \Ind_{ik} \right>}
\end{equation} 
where $i,j \in \{1,...,N\}$, $k,l\in\{1,...,M\}$ and 
\begin{equation}
    \Ind_{ik}(x,s) = \begin{cases} 1 \quad \text{if } x = x_i, s \in T_k \\
        0 \quad \text{else}
    \end{cases} 
\end{equation}
\end{definition}

These entries correspond to the assumption of a uniform prior $\mathcal{U}$ for the starting time of the particles inside the intervals:

\begin{equation}
    \hat \JumpOp_{ikjl} = \PP (Y_{n+1} = x_j, J_{n+1} \in T_l \mid Y_n = x_i, J_n \sim \mathcal{U}(t_k))
\end{equation}

The following proposition shows how to compute the entries assuming a finite time horizon and a generator which is piecewise constant on each time interval:

\begin{proposition}\label{thm:Tconstruct}
Assume the generator $Q(t)$ is constant on each $T_k$.
We then have 
\begin{equation}
\begin{aligned}
\hat \JumpOp_{ikjl} = \begin{cases}
    \Delta T_k^{-1} \tilde q_{ij}(t_l) q_i(t_k)^{-1} (1-s_{ik}) (1-s_{il})
        \prod\limits_{k<m<l} s_{im}
            &\text{if $k<l$} \\
    \Delta T_k^{-1} \tilde q_{ij}(t_k) q_i(t_k)^{-1} \left( s_{ik} + \Delta T_k q_i(t_k) - 1\right)
            &\text{if $k=l$} \\
    0 & \text{else}
    \end{cases}
\end{aligned}
\end{equation}
where $s_{ik} := \exp\left({-\Delta T_k q_i(t_k)}\right)$

\end{proposition}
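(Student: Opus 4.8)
The plan is to reduce the Galerkin entry to an honest double integral of the kernel from \cref{thm:ajc} and then evaluate it interval by interval using the piecewise-constant assumption. First I would unfold the definition of $\hat\JumpOp_{ikjl}$. Since $\SS$ carries the counting measure and $\TT$ the Lebesgue measure, the normalizing factor is $\left<\Ind_{ik}, \Ind_{ik}\right> = \int_{T_k}\dd s = \Delta T_k$, and a short computation from the definition of $\JumpOp$ gives
\begin{equation}
\left<\Ind_{jl}, \JumpOp\Ind_{ik}\right> = \int_{T_l}\int_{T_k} k(x_i,s,x_j,t)\,\dd s\,\dd t,
\end{equation}
so that the whole claim reduces to evaluating
\begin{equation}
\hat\JumpOp_{ikjl} = \frac{1}{\Delta T_k}\int_{T_l}\int_{T_k} \tilde q_{ij}(t)\, q_i(t)\,\exp\!\left(-\int_s^t q_i(u)\,\dd u\right)\dd s\,\dd t,
\end{equation}
with the kernel understood to vanish for $s\ge t$.

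The case split then falls out of the support of the kernel. For $k>l$ every $s\in T_k$ exceeds every $t\in T_l$, so $s>t$ throughout and the integrand is identically zero, giving the "else" branch. For $k<l$ and for $k=l$ the piecewise-constant assumption lets me replace $\tilde q_{ij}(t)$ and $q_i(t)$ by their values at the representative nodes $t_l$ (resp.\ $t_k$), and, crucially, lets me split the survival integral across the intermediate intervals:
\begin{equation}
\int_s^t q_i(u)\,\dd u = q_i(t_k)(t_k-s) + \sum_{k<m<l} q_i(t_m)\,\Delta T_m + q_i(t_l)(t-t_{l-1}).
\end{equation}
Exponentiating factorizes the survival term, and the middle sum produces exactly the product $\prod_{k<m<l} s_{im}$ after recognizing $e^{-q_i(t_m)\Delta T_m} = s_{im}$.

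In the $k<l$ branch the two remaining one-dimensional integrals separate cleanly: the substitution $w = t_k-s$ gives $\int_{T_k} e^{-q_i(t_k)(t_k-s)}\,\dd s = (1-s_{ik})/q_i(t_k)$, while $\int_{T_l} q_i(t_l)\,e^{-q_i(t_l)(t-t_{l-1})}\,\dd t = 1-s_{il}$; multiplying these against $\tilde q_{ij}(t_l)\prod_{k<m<l} s_{im}$ and dividing by $\Delta T_k$ yields the first line of the formula. For $k=l$ the constraint $s<t$ confines the integration to the triangle $t_{k-1}<s<t\le t_k$; integrating $q_i(t_k)e^{-q_i(t_k)(t-s)}$ over $s$ first and then over $t$ produces $\Delta T_k - (1-s_{ik})/q_i(t_k)$, which after factoring out $q_i(t_k)^{-1}$ is precisely $q_i(t_k)^{-1}\bigl(s_{ik} + \Delta T_k q_i(t_k) - 1\bigr)$, matching the second line. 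I expect the main obstacle to be purely bookkeeping rather than conceptual: keeping the half-open interval endpoints consistent so the survival integral splits correctly across $T_k$, the interior intervals, and $T_l$, and handling the triangular domain in the diagonal case $k=l$ without double counting the $s<t$ boundary. Everything else is routine evaluation of exponential integrals.
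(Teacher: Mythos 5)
Your proposal is correct and takes essentially the same approach as the paper's proof: reduce the Galerkin entry to the double integral of the jump kernel, split the survival exponent $\int_s^t q_i(u)\,\dd u$ across the piecewise-constant intervals to factor out $\prod_{k<m<l} s_{im}$, evaluate the two boundary exponential integrals for $k<l$, and integrate over the triangular domain when $k=l$, with the $k>l$ case vanishing by the support of the kernel. The only differences (your order of integration in the diagonal case and your explicit remark that the $q_i(t_l)$ factors cancel) are cosmetic.
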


\begin{proof}
We have
\begin{equation}
\begin{aligned} 
    \hat \JumpOp_{ikjl} &= \frac{\left< \Ind_{jl}, \JumpOp \Ind_{ik} \right>}{\left< \Ind_{ik}, \Ind_{ik} \right>} \\
     &= \Delta T_k^{-1} \int_{T_l} \int_{T_k} k(x_i, \tau_0, x_j, \tau_1) d\tau_0 d\tau_1 \\
     &= \Delta T_k^{-1} \int_{T_l} \int_{T_k} \tilde q_{ij}(\tau_1) q_i(\tau_1) \exp \left(-\int_{\tau_0}^{\tau_1} q_i(\tau) d\tau \right) d\tau_0 d\tau_1
\end{aligned}
\end{equation}

For $k < l$ we decompose the integral in the exponent on the time intervals

\begin{equation}
\label{eqn:qintegral}
\begin{aligned}
   \int_{\tau_0}^{\tau_1} q_i(\tau) d\tau &= \int_{\tau_0}^{t_{k}} q_i(\tau) d\tau+ \sum_{k<m<l} \int_{t_{m-1}}^{t_m} q_i(\tau) d\tau+ \int_{t_{l-1}}^{\tau_1} q_i(\tau) d\tau \\
   &= (t_{k} - \tau_0) q_i(t_k) + (\tau_1 - t_{l-1}) q_i(t_l)+ \sum_{k<m<l} \Delta T_m q_i(t_m) 
\end{aligned}
\end{equation}

Furthermore computing

\begin{equation}
\begin{aligned}
    \int_{T_k} \exp(-(t_{k}-\tau_0)q_i(t_k)) d\tau_0 = \int_0^{\Delta T_k} \hspace{-2em}\exp(-\tau q_i(t_k)) d\tau = (1-\exp\left(-\Delta T_k q_i(t_k)\right))q_i(t_k)^{-1}
\end{aligned}
\end{equation}

and similarly for the $\int_{T_l}$ part leads us to
\begin{equation}
\begin{aligned}
   \hat \JumpOp_{ikjl} = \Delta T_k^{-1} \tilde q_{ij}(t_l) q_i(t_l) \exp \left(- \hspace{-.3em}\sum_{k<m<l} \hspace{-.3em} \Delta T_m q_i(t_m) \right) 
    (1-s_{ik})q_i(t_k)^{-1} (1-s_{il})q_i(t_l)^{-1}
\end{aligned}
\end{equation}

In the case of $k = l$ we have to take care that the arrival time must be larger than the initial time ($t_0>t_1$ implies $k(x_i,t_0,x_j,t_1) = 0$) and we hence compute
\begin{equation}
\begin{aligned}
    \hat \JumpOp_{ikjl} &= \Delta T_k^{-1} \tilde q_{ij}(t_k) q_i(t_k) \int_{t_{k-1}}^{t_{k}} \int_{\tau_0}^{t_{k}} \exp\left(-\int_{\tau_0}^{\tau_1} q_i(t_k) d\tau\right) d \tau_1 d\tau_0 \\
    &=\Delta T_k^{-1} \tilde q_{ij}(t_k) q_i(t_k) \left( s_{ik} + \Delta T_k q_i(t_k) - 1\right) q_i(t_k)^{-2}
\end{aligned}
\end{equation}

For $k>l$ it follows that $\hat \JumpOp_{ikjl} = 0$.
\end{proof}

Using a space\hyp{}major indexing scheme we can rearrange the discretization to a matrix $J = (J_{ab})_{a,b \in \{1,...,NM\}}$ via 
\begin{equation}
    J_{i + (k-1)M, j + (l-1)M} := \hat \JumpOp_{ikjl}
\end{equation} 
as illustrated in \cref{fig:matrix}.
Since the Galerkin projection of the adjoint is the transpose of the Galerkin projection the matrix $J$ corresponds to $\JumpOp$ as well as $\JumpOp^\dagger$ when applying the vectors from either the left resp. the right side.

 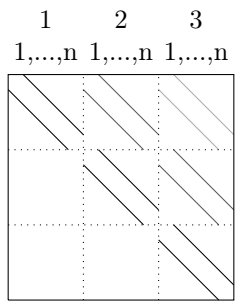
\begin{figure}
    \centering
    \begin{tikzpicture}[scale=1]
        \newcommand\os{1/5}
        \draw (0,0) -- (3,0) -- (3,3) -- (0,3) -- (0,0);
        \draw[dotted] (0,1) -- (3,1);
        \draw[dotted] (0,2) -- (3,2);
        \draw[dotted] (1,0) -- (1,3);
        \draw[dotted] (2,0) -- (2,3);
        \foreach \i in {0,1,2}
            {
            \node [above] at (1/2+\i,3) {1,...,n};
            
            \foreach \j in {1,2,3}
            {
                \ifnum \j>\i
                    \draw[opacity=(4-\j+\i)/3] (\j - 1 + \os, -\i + 3) -- ++(1-\os,-1 + \os);
                    \draw[opacity=(4-\j+\i)/3] (\j - 1, -\i + 3 - \os) -- ++(1-\os,-1 + \os);
                \fi
            }
            }
        \node [above] at (1/2,3.5) {1};
        \node [above] at (3/2,3.5) {2};
        \node [above] at (5/2,3.5) {3};
    \end{tikzpicture}
    \caption{Matrix representation of the Galerkin discretization in space-major order (the outer indices denote the time and the inner ones the space).
    A horizontal line of the matrix represents the probabilities to jump to a space-time point when starting at a fixed space-time position. The probabilities are decreasing (non\hyp{}homogeneous\hyp{}) exponentially with the time blocks. The sparsity structure in each time\hyp{}block corresponds to that of the generator at that time. We have a tridiagonal block structure since particles only move forward in time.}
    \label{fig:matrix}
\end{figure}

We would like to note that this is a very crude proof-of-concept discretization providing the means to compute above posed problems numerically. 
The assumption of piecewise constant inhomogeneity $Q(t)$ may be dropped when solving the corresponding integrals \cref{eqn:qintegral} either analytically or by quadrature.
In the case of varying implicit timescales $0< q_i\ll q_j$ we expect adaptive time\hyp{}discretizations to be of aid.
Since the survival times are exponentially decaying a cutoff may reduce complexity for long time\hyp{}horizon calculations.
As always with Galerkin methods one can adapt this method with different ansatz functions \cite{vikram}.
Although these are import questions the discretization is not the focus of this manuscript and we defer them for later research.

%\newpage
\subsection{Sparseness and Complexity}
\label{sec:sparsity}
We constructed the augmented jump chain with the goal of sparsity in mind.
We can see that the transition kernel \cref{eqn:jumpkernel} of the jump chain is given in terms of the rates $\tilde{q_{ij}}(t)$.
Therefore the sparsity of the infinitesimal generator, a property very common in many applications, is inherited by this representation.
This concept is also reflected in our discretization: Whenever $q_{ij}(t_l)$ is zero, $\hat \JumpOp_{ikjl}$ and the corresponding entry in the matrix $J$ is zero as well.

Whilst the matrix $J$ is much bigger ($NM \times NM$) than e.g. the generator of an autonomous system ($N \times N$), some increase in complexity is to be expected when going from the non\hyp{}autonomous to the autonomous regime.
We hence might compare our approach to the classical augmentation of the transfer operator. The classical augmentation leads to a band diagonal block matrix where the first off diagonal blocks are composed of the transition matrices between the individual time points $t_k$. Whilst the number of non-zero blocks, $\mathcal{O}(M)$, is much smaller then in our suggested approach, $\mathcal{O}(M^2)$, each of these blocks is dense.

This difference becomes crucial when considering very big, sparse systems, such as e.g. diffusion or molecular dynamics on high-dimensional spaces: Using a regular grid of $L$ subdivisions in each of the $D$ space dimensions we end up with $N=L^D$ Markov states. However since each of those only interacts with its respective neighbours the generator has only $2LD$ nonzero entries. Therefore, whilst the augmented transition matrix has $ML^D$ non-zero entries, the augmented jump chain matrix $J$ has $\mathcal{O}(M^2 LD)$ entries, thus practically eliminating the exponential curse of dimensionality.

\section{Numerical examples}
In this section we will first illustrate the developed concepts on a simple time-dependent 2-state model and then compute basic error statistics for the jump operator discretization of the overdamped Langevin dynamics in a 2-dimensional potential landscape.

\subsection{A simple 2-state model}

For the first example we consider two states, $\SS = \{A, B\} $ on the time interval $\TT=[0,8]$. The dynamics of the jump process at each time is fully determined by the respective rates of transitions from $A \rightarrow B$ and $B \rightarrow A$ respectively.
Aiming for a non-autonomous but simplistic example we define the process to consist of two phases. In the first half of the time interval it is possible to transition from $A$ to $B$ at rate 1 whereas $B$ is absorbing and in the second half we reverse the roles:
$$ Q(t) = \begin{pmatrix} -\Ind_{t<4} & \phantom{-} \Ind_{t<4} \\ \phantom{-} \Ind_{t\ge4} &  - \Ind_{t\ge4} \end{pmatrix}
$$
We then compute the Galerkin discretization of the jump operator as in section \ref{sec:galerkin}.
Partitioning the time interval into $M=8$ uniform intervals we obtain the jump matrix $J$ depicted in the left of Figure \ref{fig:ex1}.
\begin{figure}[]
    \centering
    \begin{subfigure}{0.49\textwidth}
                  \centering
                  \includegraphics[width=\textwidth]{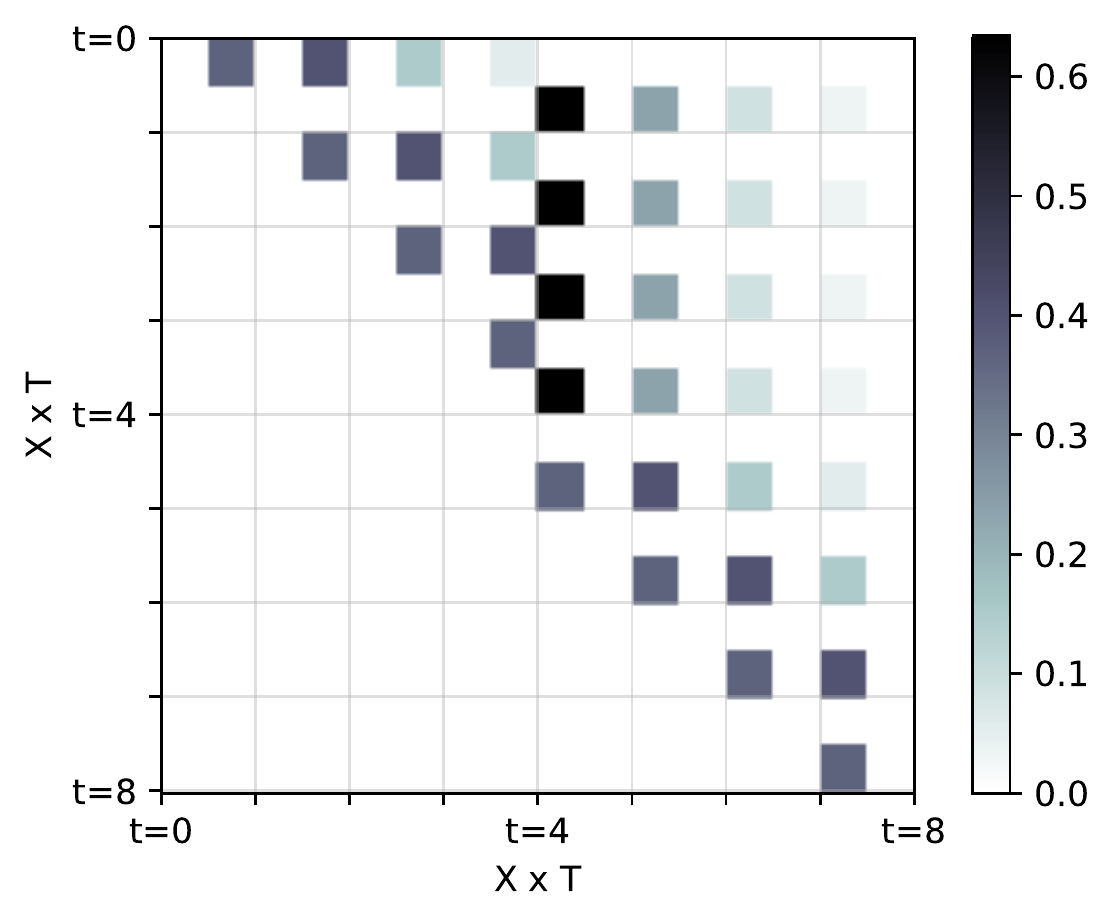}
                  %\caption{Transition Kernel}
    \end{subfigure}
    \begin{subfigure}{0.5\textwidth}
          \centering
          \includegraphics[width=\textwidth]{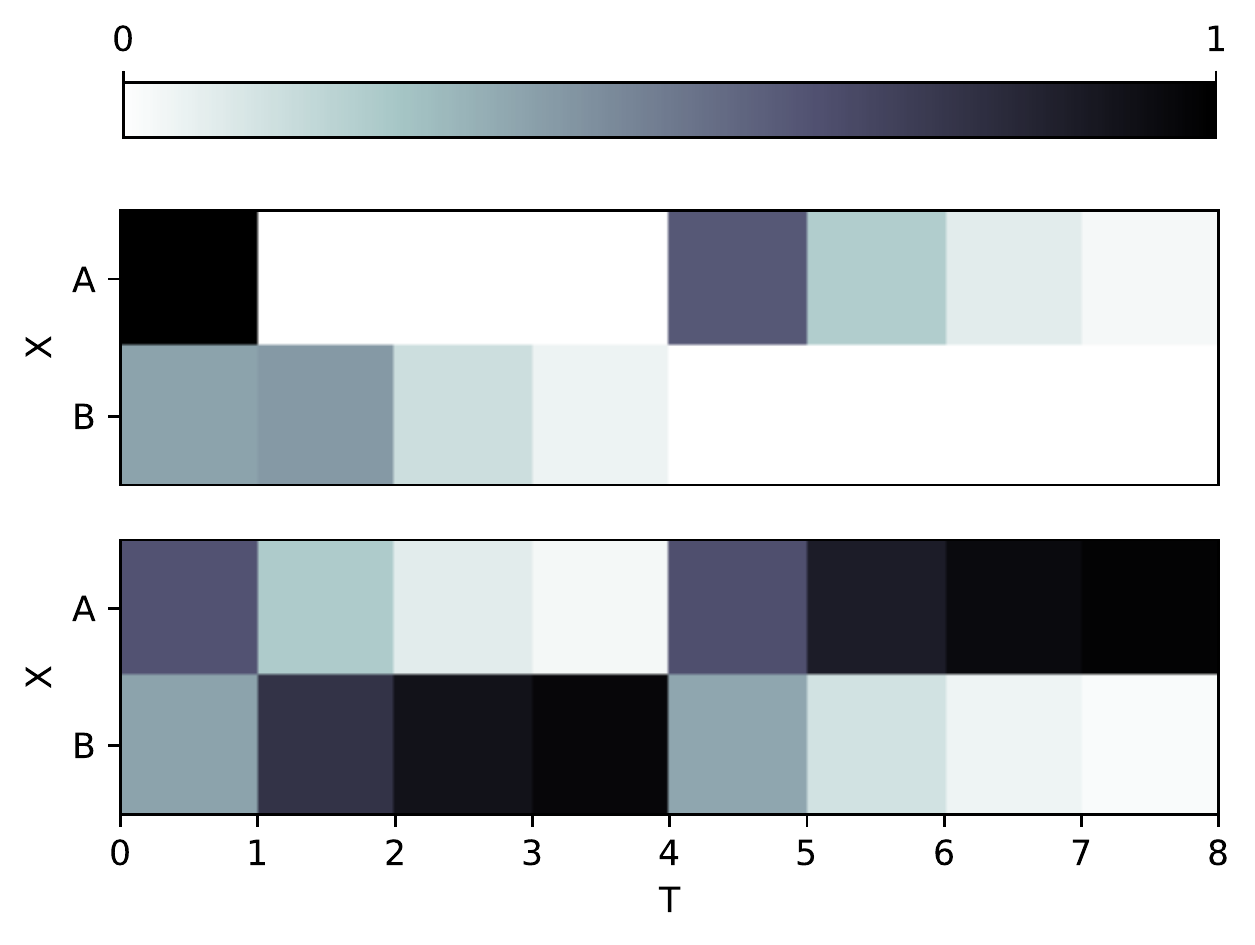}
          %\caption{Transition Kernel}
        
    \end{subfigure}
    \caption{Illustration of the augmented jump chain for a 2 state system. \\
    Left: Discretized jump operator. Right: Jump activity (top) and recovered probability density (bottom) when the system starts in state $A$ at time 0.   } \label{fig:ex1}
\end{figure}
Since we used space-major ordering for space-time states each $2\times 2$ block represents the transitions from and into a time-slice whereas the position inside the blocks determines the spatial start- and end-positions (see also Figure \ref{fig:matrix}. Looking at the upper row of blocks we observe that for the initial two time blocks the dominant transitions are those from space-state A to B. This switches in the second half of the time interval, i.e. for the blocks on the right half of the matrix. That is, trajectories that started at time $0$ in state B will most likely jump after $t=4$, when $B$ is no longer absorbing.
We can also recognize the exponential decay of the probabilities with time.
The following rows of blocks encode the behaviour for the jumps starting from later times and mimic the qualitative behaviour of the top row although with different densities.
% Maybe: explain that diagonal entries are diminished due to partially starting later in the cell

Starting from an initial distribution we are now in the position to look at the induced jump activity, its synchronization and the resulting Koopman operator.
Let us start with a space-time distribution $f \in \mathbb{R}^{2\times 8}$ with all mass in state $A$ at the initial time interval, i.e. $f_{x,t} = \delta_{x,A} \Ind_{t \in \{0,1\}}$. We then compute the jump activity from eq. \cref{eqn:jumpact} truncating the sum at $n=100$ for reasons of computability. The top right of Figure \ref{fig:ex1} depicts the resulting activity $Ef$ which can be understood as the amount of space-time jumps happening into each space-time cell akin to a Geiger counter.
We can identify the initial mass (top-left cell), as well as the intensity of the following jump destinations. The intensity decays with time since less and less particles remain available for the transition from $A\rightarrow B$ whereas the other direction is inhibited by the 0 rate. This changes at $t=4$ where we switch the reaction rates and observe a similar pattern in the reverse direction. The leftmost cells are a special case. Due to the discretization we don't start at time $t=0$ but uniformly in the first time-cell. Since the particles are spread out over that time interval the probability to jump is lower than the probability when starting at time $t=0$, which is why the activity in these cell is lower.

\subsection{Diffusion process with changing temperature}

In order to illustrate the applicability to molecular dynamics we now consider a diffusion process with drift induced by a potential. We reduce the temperature in time, akin to the process in simulated annealing. More precisely, we consider the overdamped Langevin equation in $\mathbb{R}^2$, 
\begin{equation*}
    \dd Y_t = - \nabla V(Y_t) \dd t + \sqrt{2 \beta(t)^{-1}} \dd W_t
\end{equation*}
with a triple well potential $V$ with 2 deep wells at $(-1,0)$, $(1,0)$ and a shallow well at $(0,\frac{3}{2})$ as in  \cite{helfmann2020extending}. $W_t$ denotes standard Brownian motion and $\beta(t)$ is the varying inverse of the temperature / the coldness.

%\begin{equation*}
%       \begin{split} V(x,y) = & \frac{3}{4} \exp\left( -x^2-\left(y-\frac{1}{3}\right)^2\right) - \frac{3}{4} \exp\left(  -x^2-\left(  y-\frac{5}{3}\right)^2\right) \\
%       &-\frac{5}{4} \exp\left(  -\left(  x-1\right)^2 - y^2\right) - \frac{5}{4} \exp\left(  -\left(  x+1\right)^2-y^2\right) + \frac{1}{20} x^4 + \frac{1}{20} \left(  y-\frac{1}{3}\right)^4
%      . \end{split}
%    \end{equation*}
We discretize the state-space on the domain $[-2,2] \times [-1,2]$ by dividing it into a square grid of $n_x=9$ horizontal and $n_y=7$ vertical points. In order to obtain the spatially discrete jump process approximation to the originally space-continuous process, we use the \emph{square-root approximation} (SQRA) \cite{donati2018sqra}.
The SQRA estimates a generator matrix on the space of states identified with the grid points by linearly  interpolating the potential between neighbouring points and calculating the resulting rates for a given temperature.
It is called SQRA since it can be expressed in terms of the square root of the Boltzmann weights as follows:
\begin{equation}
    Q_{ij} = \Phi A_{ij} \sqrt{\frac{\exp (-\beta V_j)}{\exp( -\beta V_i)}}
\end{equation}
where $A_{ij}$ denotes the the adjacency matrix of the grid points, $V_i$ the potential at grid point $i$ and the diagonal $Q_{ii}$ is set to satisfy row sum zero. The factor $\Phi$ amounts to the transition rate in a flat potential and depends on the $\beta$ as well as the spatial grid-size $h$ by $\Phi=\beta^{-1} h^{-2}$ \cite{donati2020}.

For the time domain we chose $T=[0,2]$ which we subdivide into $n_t=6$ uniform time cells of size $\Delta T=\frac{1}{3}$ and we impose an annealing protocol by starting with high temperature in the first half, $\beta(t)=1$ for $t \in [0,1)$, and decreasing it in the second half, $\beta(t)=10$ for $t \in [1,2]$.

\begin{figure}
    \centering
    \begin{subfigure}{0.49\textwidth}
            \label{fig:tw_kernel}
                  \centering
                  \includegraphics[width=\textwidth]{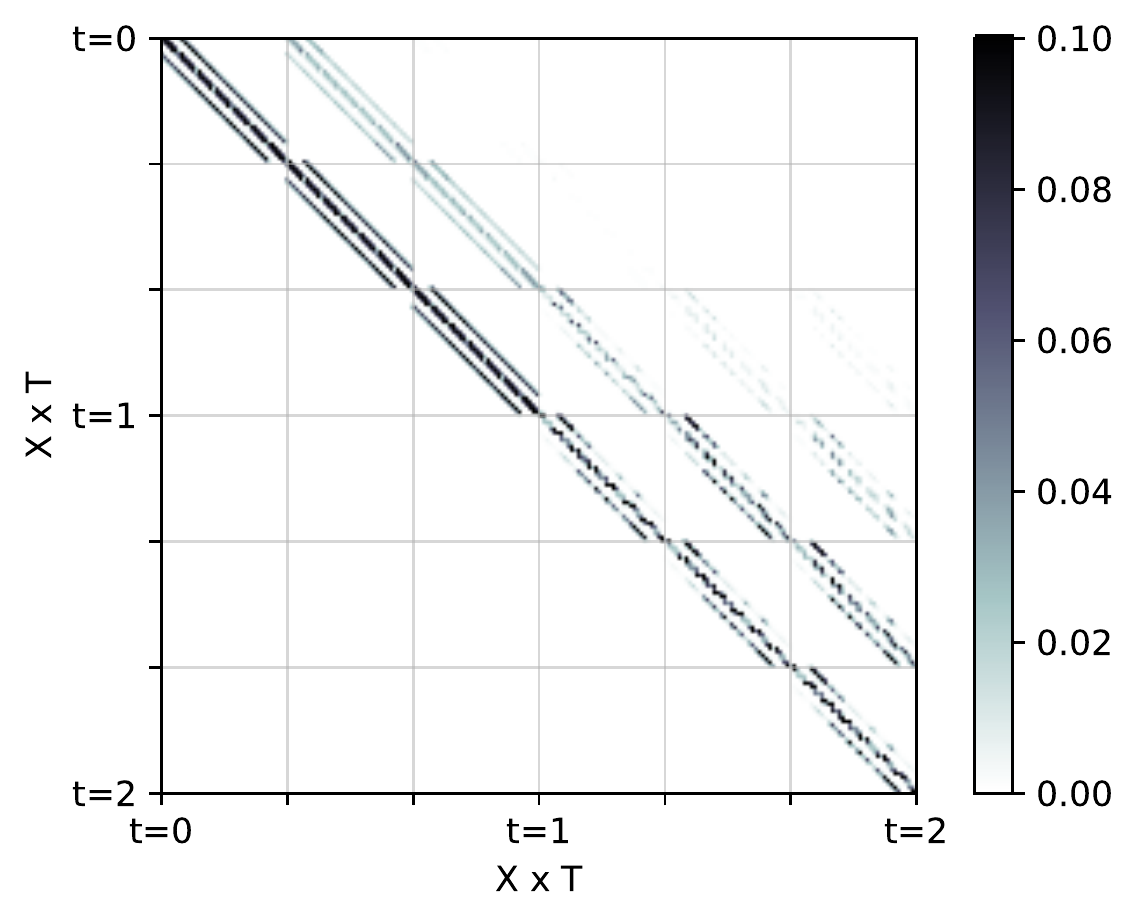}
                  %\caption{Transition Kernel}
    \end{subfigure}
    \begin{subfigure}{0.5\textwidth}
            \label{fig:tw_sparse}
          \centering
          \includegraphics[width=0.75\textwidth]{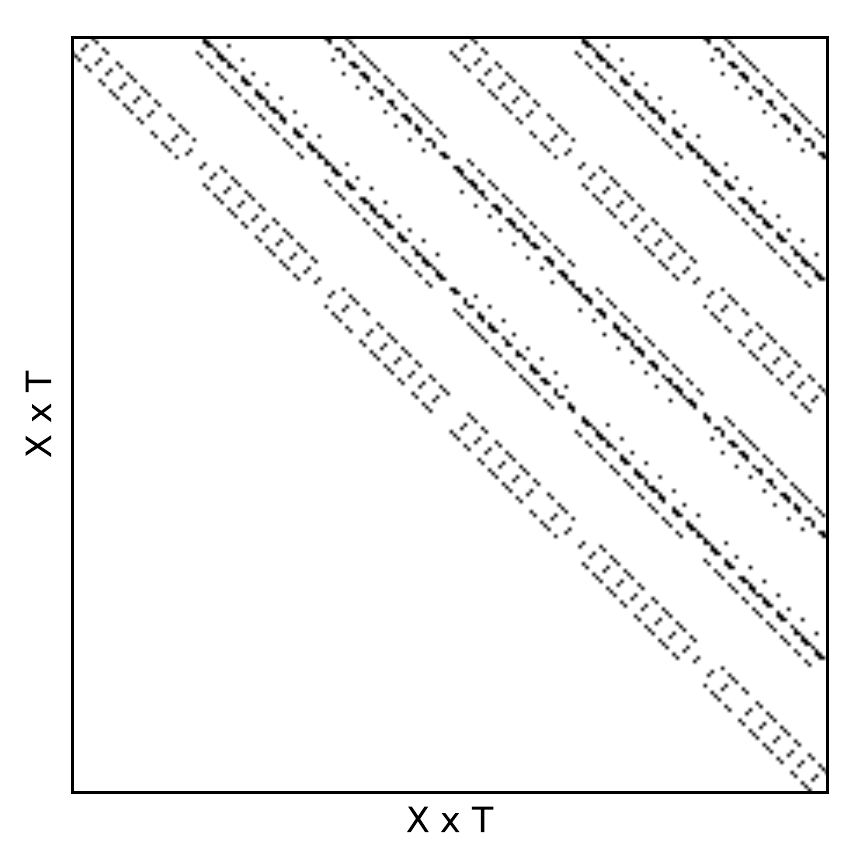}
          %\caption{Transition Kernel}
        
    \end{subfigure}
    \caption{Two dimensional diffusion process with decreasing temperature. \\
    Left: Discretized jump operator. Right: Sparsity pattern.}
    \label{fig:ex2}
\end{figure}

The left of Figure \ref{fig:ex2} shows the corresponding discretization of the space-time jump operator $J$. We can recognize the high-temperature regime on the left half of the matrix by the rather uniform distribution of transition probabilities inside each block, as well as by the fast timescale of the reactions indicated by a high amount of temporal self-transitions on the diagonal blocks, with quickly decaying transitions to the future time blocks (almost none for the second off-diagonal).
On the other hand, the right half of the matrix encodes the behaviour of the low temperature-regime.
The distribution of transitions inside each block is more peaked as the potential-induced drift dominates the now small noise. We also see that the process slowed down since we have more transitions to the future blocks on the off-diagonal corresponding to particles that remain in place for longer times.

Whereas the matrix is $(n_x n_y n_t)^2 = 142884$ dimensional only $4620$ entries are nonzero, leading to a sparsity factor of $3.1\%$. The sparsity pattern is depicted on the right of Figure \ref{fig:ex2}.

The approximation error of the spatial discretization of the process by means of the SQRA is discussed in \cite{heida2018convergences}. 
We can analyze the approximation error $\epsilon$ of the temporal Galerkin approximation by comparing the reconstruction of the propagator $\MeasOp^t$ (\cref{sec:prop}\footnote{Here we approximate the discretized survival probabilities between the time-block as one minus the probabilities to leave the blocks, i.e. $\hat S_{ikl} = 1 - \sum_{j, s\le l} \hat J_{ikjs}  $ })
to the exact propagator $\prop^{0,t}$ of the Markov jump process obtained from the matrix exponential of Q (which is piecewise constant) by means of the $L^2$ operatornorm at the end-time $t=2$:
\begin{equation}
    \epsilon = \left\Vert \MeasOp^{2} - e^{Q(0)} e^{Q(1)} \right\Vert
\end{equation}
Figure \ref{fig:tw_error} shows the resulting error for our example for temporal step sizes between $0.01$ and $1$ and we observe convergence close to order 1.  
\begin{figure}[h]
\centering
\includegraphics[width=5cm]{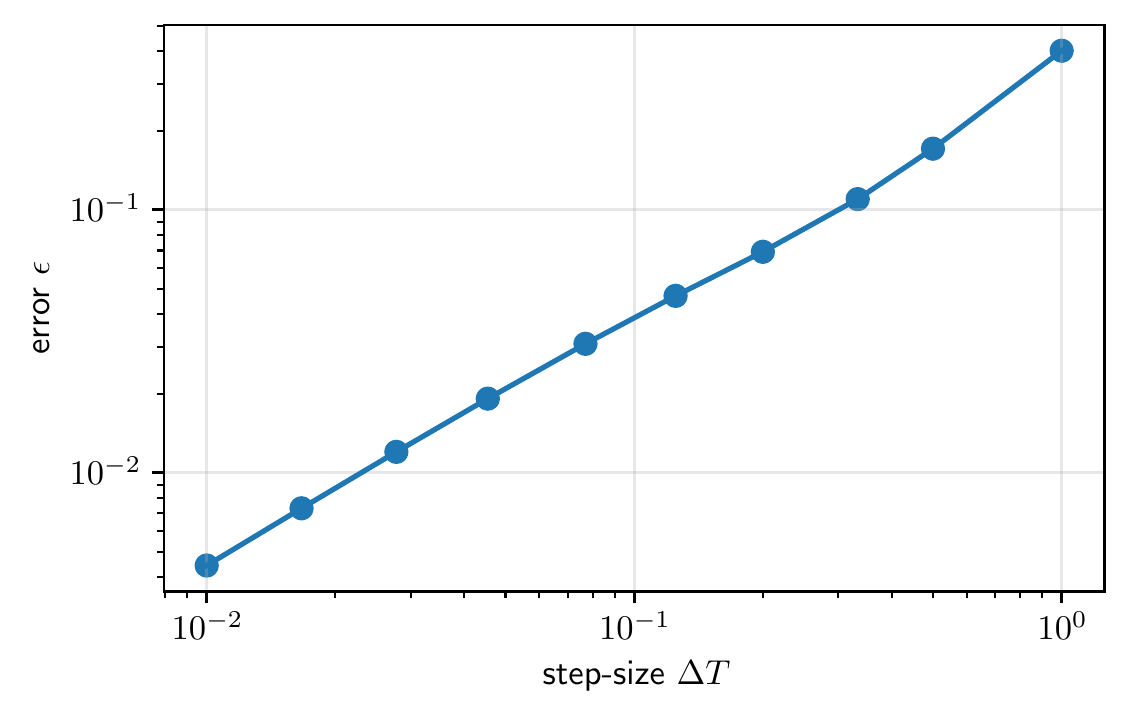}
\caption{Approximation error of the propagator reconstructed from the Galerkin approximation wrt. the temporal step size. }\label{fig:tw_error}
\end{figure}

Although these examples mainly serve to display the concept of the space-time augmented jump chain and merely recompute already known quantities, they also illustrate its main strength, i.e. dealing with non-autonomous processes in a sparse way whilst encoding structural properties such as the mixing behaviour and timescales of the underlying problem.

%\newpage
\section{Conclusion}
We extended the known representation of autonomous Markov jump processes as embedded Markov chain (\cref{thm:aut}) to the non\hyp{}autonomous regime.
Augmenting the state space with the time dimension allows us to encode the temporal dependence of the embedded chain in the new space-time state space. Therefore we end up with an time\hyp{}independent representation for the system. While the augmentation is a common technique for non\hyp{}autonomous systems, the novelty of our approach is that we only look at the jump events themselves. This allows us to move from a non\hyp{}autonomous continuous-time Markov process to an autonomous discrete-time Markov chain (\cref{thm:ajc}), albeit on a more complex state space.
We call this Markov chain the \emph{augmented jump chain} and characterize it through its transition kernel and evolution operator, the \emph{jump operator}.

This approach leads to a fundamentally new perspective on time: Whereas classically time progresses uniformly, we now have a description where the process jumps through time concurrently.
Whilst it is possible to revert to the classical picture through a synchronization, i.e. by assigning a membership along each space-fibre of the augmented system towards a specific time\hyp{}point in uniform time, it is interesting to see that many problems can be tackled in the augmented regime directly.
We showed how the evaluation of the Koopman operator, i.e. the evolution of an observable through time, can be solved directly in the "desynchronized" regime in the form of an inhomogeneous linear boundary value problem on space-time (\cref{cor:koop}).
This problem structurally resembles the one for the computation of committor functions in stationary systems.

We discuss connections of our representation to the computation of committors for time\hyp{}independent target sets but non\hyp{}autonomous dynamics.
The time\hyp{}augmented perspective furthermore allows for a natural extension to a wide class of time\hyp{}dependent targets and eventually a non\hyp{}autonomous committor theory.
We furthermore discuss the application of the augmentation to the theory of coherence where it seems to provide a promising view on capturing time\hyp{}invariant structures.

The defining principles of our proposed approach are twofold. For one the well-known technique of augmentation allows us to treat non\hyp{}autonomous system and extend common notions of analysis (committors, metastability) in a unifying way to the time\hyp{}dependent regime.
The other however is far less understood: By focusing on the jump events as main principle of evolution in contrast to the usual focus on time, we arrive at a description where the classical time evolves concurrently. We show how this leads to a representation inheriting the sparsity of the infinitesimal generator. This in itself may prove to be very useful for the computational analysis of (especially high-dimensional) non\hyp{}autonomous systems.
However interpreting the concurrency as uncoupling of different time\hyp{}scales requires further research and we believe that it becomes a cornerstone for the analysis of complex dynamics with multiple-timescales.

All in all, we hope for the \emph{augmented jump chain} to enhance the numerical capabilities for complex systems on the applied side as well as opening doors to new perspectives for time\hyp{}dependent jump processes on the theoretical side.

\paragraph{Acknowledgments}
We would like to thank Luzie Helfmann for many insightful discussions, especially about committor functions, as well as proof-reading.
This research has been funded by Deutsche Forschungsgemeinschaft (DFG) through grant CRC 1114 "Scaling Cascades in Complex Systems", Project Number 235221301.
\section*{Appendix}
\subsection*{The non-homogeneous exponential distribution} \label{sec:nonhomexp}

Albeit what we call the non-homogeneous exponential distribution may very likely be already known, e.g. in the field of survival analysis, we could not find any published references. We therefore present a short derivation based on an answer on stackexchange \cite{2573322}:

Define the non-homogeneous exponential distribution (NED) with rate $q: \mathbb{R}^+ \rightarrow \mathbb{R}^+$ by the cumulative distribution function (CDF)
$$\PP[t>T] = F(t) = 1-\exp\left(-\int_{0}^{t} q(s) \dd s\right),$$
where $T$ is the NED distributed random time.
Note that $F$ indeed is a CDF:
$$F(0) = 0, \lim_{t\rightarrow\infty} F(t) = 1.$$
Then its derivative is the probability distribution function (PDF)
$$f(t) = \frac{dF}{dt}(t) = q(t) \exp\left(-\int_{0}^{t} q(s) \dd s\right).$$
Now consider the conditional probability
$$p_{\Delta t} (t) = \PP(t+\Delta t > T \mid T>t)  = \frac{F(t+\Delta t) - F(t)}{1-F(t)}$$
and its rate, i.e. the limit for $\Delta t \rightarrow 0$
$$\lambda(t) = \lim_{\Delta t \rightarrow 0} \frac{p_{\Delta t}}{\Delta t} (t) = \frac{F'(t)}{1-F(t)} = \frac{f(t)}{1-F(t)} = q(t)$$
The homogeneous exponential distribution (HED) with rate $q$ is a special case of the NED with $q\equiv q(t)$.
Hence the NED has the same conditial rate as the HED for an event occuring at each time $t$, i.e. is the consistent generalization to non\hyp{}autonomous rates.

Furthermore the survival probability satisfies
$$ S(t) = \PP [t<T] = 1-F(t) = \exp\left(-\int_{0}^{t} q(s) \dd s\right). $$

\printbibliography

\end{document}